\titleformat*{\section}{\large\bfseries}
\titleformat*{\subsection}{\large\bfseries}
\newtheorem{thm}{Theorem}[section]
\newtheorem{cor}[thm]{Corollary}
\newtheorem{lem}[thm]{Lemma}
\newtheorem{assum}[thm]{Assumption}
\newtheorem{prop}[thm]{Proposition}
\newtheorem{defn}[thm]{Definition}
\newtheorem{rem}[thm]{Remark}
\providecommand{\keywords}[1]
{
	\small	
	\textbf{\textit{Keywords---}} #1
}
\newcommand\blfootnote[1]{%
	\begingroup
	\renewcommand\thefootnote{}\footnote{#1}%
	\addtocounter{footnote}{-1}%
	\endgroup
}
\title{\textbf{Primal-dual evolutionary dynamics}\\
	   \textbf{for constrained population games}}
\author[1]{Juan Martinez-Piazuelo}
\author[1]{Nicanor Quijano}
\author[2]{Carlos Ocampo-Martinez}
\affil[1]{Departamento de Ingenier\'ia El\'ectrica y Electr\'onica, Universidad de los Andes, Carrera 1 No. 18A-10, Bogot\'a, Colombia}
\affil[2]{Automatic Control Department, Universitat Polit\`ecnica de Catalunya, Institut de Rob\`otica i Inform\`atica Industrial (CSIC-UPC), Llorens i Artigas, 4-6, 08028, Barcelona, Spain}
\date{}
\begin{document}

\maketitle

\begin{abstract}                          
	Population games can be regarded as a tool to study the strategic interaction of a population of players. Although several attention has been given to such field, most of the available works have focused only on the unconstrained case. That is, the allowed equilibrium of the game is not constrained. To further extend the capabilities of population games, in this paper we propose a novel class of primal-dual evolutionary dynamics that allow the consideration of constraints that must be satisfied at the equilibrium of the game. Using duality theory and Lyapunov stability theory, we provide sufficient conditions to guarantee the asymptotic stability and feasibility of the equilibria set of the game under the considered constraints. Furthermore, we illustrate the application of the developed theory to some classical population games with the addition of constraints.
\end{abstract}
\keywords{Evolutionary game theory; Nonlinear models; Convex optimization; Duality.}

\blfootnote{Corresponding author: Juan Martinez-Piazuelo (jp.martinez10@uniandes.edu.co)}
\blfootnote{Preprint. Under review.}

\section{Introduction}
Population games provide an evolutionary game theoretical framework to study the decision making process of a population of players \cite{hofbauer1998evolutionary}, \cite{sandholm2010}. As such, the study of population games and population dynamics have received significant attention over the control community \cite{quijano2017}. For instance, the authors in \cite{pashaie2017} illustrate the application of population dynamics to the dynamic resource allocation in a water distribution system. Similarly, the authors in \cite{tembine2010} rely on the formalism of population games to design evolutionary dynamics for the control of wireless networks. Furthermore, the authors in \cite{parkCDC2019} have extended the classic (memoryless) population games to a framework with dynamic payoff mechanisms, which allows the consideration of more general control scenarios, and the authors in \cite{fox2013population}, \cite{park2018CDC}, and \cite{arcak2020dissipativity}, have developed a set of powerful dissipativity tools for the study of such dynamical payoff models.

Although population games have been widely studied in the literature, most of the previous works have focused only on the unconstrained case. That is, the equilibrium of the game has no constraints regarding the amount of players playing the different strategies. Clearly, this is a significant limitation as real-world engineering applications usually have constraints over the control variables. One exception is the work of \cite{barreiroauto2016}, where the concept of mass dynamics is introduced to consider decoupled and coupled affine constraints that the players should asymptotically satisfy. Another exception is the work of \cite{barreiroauto2018}, where the authors propose some novel class of decision-making protocols so that a set of decoupled affine inequality constraints is dynamically satisfied. In contrast with such previous works, in this paper we consider general (coupled and/or decoupled) convex inequality constraints that the players should asymptotically satisfy at the equilibrium of the game. To the best of our knowledge, this is the first paper that studies such type of constraints in the context of population games.

Consequently, the main contribution of this paper is the formulation and analysis of a novel class of primal-dual evolutionary game dynamics, which allow the satisfaction of general convex inequality constraints at the equilibrium of the population game. Inspired by \cite{parkCDC2019}, and exploiting the duality theory of convex optimization, we propose a dynamic primal-dual game whose payoffs evolve over time to motivate the satisfaction of the given constraints. Moreover, using standard Lyapunov stability theory, we provide sufficient conditions to guarantee the asymptotic stability of the proposed dynamics for certain classes of constrained population games.

The remainder of this paper is organized as follows. Section \ref{sec:primal_dual} introduces the proposed primal-dual game and primal-dual evolutionary dynamics. Then, Section \ref{sec:analysis} provides the theoretical analyzes. Afterwards, Section \ref{sec:numerical_experiments} presents some numerical experiments as illustration. Finally, Section \ref{sec:concluding_remarks} concludes the paper and provides some future directions of research.

\section{Constrained population games and primal-dual evolutionary dynamics}
\label{sec:primal_dual}
Consider a population of players engaged in an anonymous game with a finite set of strategies denoted as $\mathcal{S}=\{1,2,\dots,n\}$, where $n\in\mathbb{Z}_{\geq2}$. Throughout, we refer to such population as the \textit{primal} population. At any time, the fraction of players playing the strategy $i\in\mathcal{S}$ is denoted as $x_i\in\mathbb{R}_{\geq0}$, and the state of the primal population is described by the vector $\mathbf{x}=[x_i]\in\mathbb{R}_{\geq0}^n$. Therefore, the set of all possible population states is given by
\begin{equation}
\label{eq:primal_population_set}
\Delta_{\mathcal{P}} = \left\{\mathbf{x}\in\mathbb{R}_{\geq0}^n\,:\, \sum_{i\in\mathcal{S}}x_i = m_{\mathcal{P}}\right\},
\end{equation}
where $m_\mathcal{P}\in\mathbb{R}_{>0}$ denotes the total mass of the primal population. Furthermore, every strategy $i\in\mathcal{S}$ has an associated fitness function, $f_i:\mathbb{R}_{\geq0}^n\rightarrow\mathbb{R}$, which provides the payoff obtained by the players playing such strategy at a given population state. Consequently, the primal population game is completely characterized by the fitness vector $\mathbf{f}:\mathbb{R}_{\geq0}^n\rightarrow\mathbb{R}^n$, which is obtained by assembling all the fitness functions into a vector, i.e., $\mathbf{f}(\cdot)=\left[f_i(\cdot)\right]\in\mathbb{R}^n$. Depending on the form of $\mathbf{f}(\cdot)$, a different type of game might be considered. In this paper, we focus mainly on the class of full-potential games.

\begin{defn}[\cite{sandholm2010}]
	\label{def:full_potential_games}
	Let $\mathbf{f}:\mathbb{R}_{\geq0}^n\rightarrow\mathbb{R}^n$ be a population game. If there exists a continuously differentiable (potential) function $p:\mathbb{R}_{\geq0}^n\rightarrow\mathbb{R}$ that satisfies $\nabla_{\mathbf{x}}p(\mathbf{x}) = \mathbf{f}(\mathbf{x})$ for all $\mathbf{x}\in\mathbb{R}_{\geq0}^n$, then $\mathbf{f}(\cdot)$ is a full-potential game.
\end{defn}

In the context of full-potential games, the goal for the population players is to collectively maximize a potential function. In contrast with most of the previous works where such maximization is unconstrained, in this paper the players must collectively maximize a potential function subject to constraints over the population state. More precisely, every equilibrium state of the primal game must be an optimal solution to the constrained optimization problem given by
\begin{equation}
\label{eq:primal_problem}
\begin{split}
\max_{\mathbf{x}\in\Delta_{\mathcal{P}}} \,\, p(\mathbf{x}) \quad \text{s.t.} \,\, g_k(\mathbf{x}) \leq 0, \quad \forall k\in\mathcal{C},
\end{split}
\end{equation}
where $\mathcal{C}=\{1,2,\dots,q\}$ denotes the set of constraints, with $|\mathcal{C}|=q\in\mathbb{Z}_{\geq1}$; and $g_k:\mathbb{R}_{\geq0}^n\rightarrow\mathbb{R}$ is a function that characterizes the $k$-th constraint. Throughout, we define the feasible region of the primal population game as
\begin{equation*}
\mathcal{X} = \left\{\mathbf{x}\in\mathbb{R}_{\geq0}^n\,:\,g_k(\mathbf{x}) \leq 0, \, \forall k\in\mathcal{C}\right\}.
\end{equation*}
Moreover, we often consider the following assumptions regarding the primal population game, the constraint functions $\{g_k(\cdot)\}_{k\in\mathcal{C}}$, and the feasible region $\mathcal{X}$.

\setcounter{thm}{0}
\begin{assum}
	\label{assump:full_potential_game}
	The primal population game $\mathbf{f}(\cdot)$ is a full-potential game with twice continuously differentiable concave potential function $p(\cdot)$.
\end{assum}

\begin{assum}
	\label{assump:convex_constraints}
	For every $k\in\mathcal{C}$, the corresponding constraint function $g_k(\cdot)$ is convex and twice continuously differentiable.
\end{assum}

\begin{assum}
	\label{assump:slater}
	There exists some $\mathbf{\tilde{x}}\in\mathbb{R}_{>0}^n\cap\Delta_\mathcal{P}$ such that $g_k(\mathbf{\tilde{x}})<0$ for all $k\in\mathcal{C}$.
\end{assum}

\setcounter{thm}{0}
\begin{rem}
	Notice that Assumptions \ref{assump:full_potential_game}, \ref{assump:convex_constraints}, and \ref{assump:slater} are common regularity conditions in the field of convex optimization. Namely, Assumptions \ref{assump:full_potential_game} and \ref{assump:convex_constraints} characterize the smoothness and convexity of the problem in (\ref{eq:primal_problem}), and Assumption \ref{assump:slater} is the Slater's constraint qualification condition which provides a sufficient condition for strong duality to hold \cite{bertsekas2009convex}.
\end{rem}

\begin{rem}
	Since a population game is a decentralized decision making process, i.e., there is not a global coordinator to set the players' strategies, and since we are interested in the consideration of general (decoupled and/or coupled) convex inequality constraints, in this paper we limit to the case where the constraints must be satisfied only at the equilibrium of the population game, i.e., asymptotically rather than dynamically. Note that such asymptotic satisfaction of constraints has also received significant attention in the related field of distributed optimization \cite{boyd2011distributed}, \cite{zhu2012}, \cite{LEI2016110}, \cite{liang2020}, \cite{ALGHUNAIM2020109003}.
\end{rem}

To handle the aforementioned constraints, in this paper we introduce a second population game. Namely, consider a second population of players, here referred to as the \textit{dual} population, where the players are engaged in a population game with a set of strategies denoted as $\mathcal{C}_e=\mathcal{C}\cup\{0\}$. Such set of strategies corresponds to the set of constraints $\mathcal{C}$ extended with an additional (null) strategy indexed by $0$. Hence, $|\mathcal{C}_e|=|\mathcal{C}|+1=q+1$. Throughout, we define $g_0:\mathbb{R}^n\rightarrow\{0\}$, i.e., $g_0(\cdot)=0$, such that the set $\{g_k(\cdot)\}_{k\in\mathcal{C}_e}$ is well defined. At any time, the fraction of players of the dual population playing the strategy $k\in\mathcal{C}_e$ is denoted as $\mu_k\in\mathbb{R}_{\geq0}$, and the state of the dual population is given by the vector $\boldsymbol{\mu}=\left[\mu_k\right]\in\mathbb{R}_{\geq0}^{q+1}$. Thus, the set of all possible dual population states is
\begin{equation}
\label{eq:dual_population_set}
\Delta_{\mathcal{D}} = \left\{\boldsymbol{\mu}\in\mathbb{R}_{\geq0}^{q+1}\,:\,\sum_{k\in\mathcal{C}_e}\mu_k=m_{\mathcal{D}}\right\},
\end{equation}
where $m_{\mathcal{D}}\in\mathbb{R}_{>0}$ is the total mass of the dual population. As before, every strategy $k\in\mathcal{C}_e$ has an associated function that provides the payoff obtained by the players playing such strategy. However, in contrast with the primal population game, in this case, such functions are independent of the dual population state, and, instead, take as argument the primal population state. More precisely, the payoff of the strategy $k\in\mathcal{C}_e$ at the primal state $\mathbf{x}\in\Delta_\mathcal{P}$ is given by $g_k(\mathbf{x})$. Therefore, the dual population game is fully characterized by the vector $\mathbf{g}\left(\cdot\right)=\left[g_k(\cdot)\right]\in\mathbb{R}^{q+1}$. In order to couple the primal game with the dual game, we further define the \textit{primal-dual} population game, $\mathbf{f}^\mu(\cdot, \cdot) = \left[f_i^\mu(\cdot, \cdot)\right]\in\mathbb{R}^n$, where $f_i^\mu:\mathbb{R}_{\geq0}^n\times\mathbb{R}_{\geq0}^{q+1}\rightarrow\mathbb{R}$, for all $i\in\mathcal{S}$. Namely, the value $f_i^\mu(\mathbf{x}, \boldsymbol{\mu})$ provides the payoff obtained by the players of the primal population playing the strategy $i\in\mathcal{S}$ at the primal state $\mathbf{x}\in\Delta_\mathcal{P}$ and at the dual state $\boldsymbol{\mu}\in\Delta_\mathcal{D}$. In particular, in this paper we set
\begin{equation}
\label{eq:primal_dual_game}
f_i^\mu(\mathbf{x}, \boldsymbol{\mu}) = f_i(\mathbf{x}) - \sum_{k\in\mathcal{C}_e}\mu_k\frac{\partial g_k(\mathbf{x})}{\partial x_i}, \quad \forall i\in\mathcal{S}.
\end{equation}
Thus, the primal-dual game is an extension of the primal game, $\mathbf{f}(\cdot)$, that considers the constraints $\{g_k(\cdot)\}_{k\in\mathcal{C}}$ and the dual state $\boldsymbol{\mu}\in\Delta_\mathcal{D}$.

Now that we have defined the primal, dual, and primal-dual games, we proceed to introduce the evolutionary dynamics that describe the evolution of the primal and dual population states. Following the approach of \cite{sandholm2010}, we assume that the players of each population are equipped with some revision protocols to revise their strategies. More precisely, in this paper we focus on the class of impartial pairwise comparison protocols \cite{parkCDC2019}. Namely, the players of the primal population are equipped with a protocol $\rho_{j}:\mathbb{R}\rightarrow\mathbb{R}_{\geq0}$, for all $j\in\mathcal{S}$, which provides the incentive to switch to strategy $j\in\mathcal{S}$, and the players of the dual population are equipped with a protocol $\phi_{l}:\mathbb{R}\rightarrow\mathbb{R}_{\geq0}$, for all $l\in\mathcal{C}_e$, which provides the incentive to switch to strategy $l\in\mathcal{C}_e$. Moreover, for every $j\in\mathcal{S}$ and every $l\in\mathcal{C}_e$, the functions $\rho_j(\cdot)$ and $\phi_l(\cdot)$ are locally Lipschitz continuous and satisfy that
\begin{subequations}
	\label{eq:conditions_protocols}
	\begin{align}
	&\label{eq:conditions_protocols_primal}\left\{\begin{array}{cc} \rho_j(\alpha) > 0, &\text{if }\alpha>0\\
	\rho_j(\alpha) = 0, &\text{if }\alpha\leq 0\end{array}\right. \quad \forall \alpha\in\mathbb{R},\,\forall j\in\mathcal{S},\\
	&\label{eq:conditions_protocols_dual}\left\{\begin{array}{cc} \phi_l(\alpha) > 0, &\text{if }\alpha>0\\
	\phi_l(\alpha) = 0, &\text{if }\alpha\leq 0\end{array}\right. \quad \forall \alpha\in\mathbb{R},\,\forall l\in\mathcal{C}_e.
	\end{align}
\end{subequations}
Therefore, by letting $\rho_i^j:\mathbb{R}_{\geq0}^n\times\mathbb{R}_{\geq0}^{q+1}\rightarrow\mathbb{R}_{\geq0}$ denote the incentive to switch from strategy $i\in\mathcal{S}$ to strategy $j\in\mathcal{S}$, the primal evolutionary dynamics are stated as
\begin{subequations}
	\label{eq:primal_dynamics}
	\begin{align}
	&\label{eq:primal_memoryless}\rho_i^j(\mathbf{x}, \boldsymbol{\mu}) = \rho_j\left(f_j^\mu(\mathbf{x}, \boldsymbol{\mu}) - f_i^\mu(\mathbf{x}, \boldsymbol{\mu})\right), \quad \forall i,j\in\mathcal{S},\\
	&\label{eq:primal_dot}\dot{x}_i = \sum_{j\in\mathcal{S}}\left(x_j\rho_j^i(\mathbf{x}, \boldsymbol{\mu}) - x_i\rho_i^j(\mathbf{x}, \boldsymbol{\mu})\right), \quad \forall i\in\mathcal{S}.
	\end{align}
\end{subequations}
Similarly, by letting $\phi_k^l:\mathbb{R}_{\geq0}^n\rightarrow\mathbb{R}_{\geq0}$ denote the incentive to switch from strategy $k\in\mathcal{C}_e$ to strategy $l\in\mathcal{C}_e$, the dual evolutionary dynamics are defined as
\begin{subequations}
	\label{eq:dual_dynamics}
	\begin{align}
	&\label{eq:dual_memoryless}\phi_k^l(\mathbf{x}) = \phi_l\left(g_l(\mathbf{x}) - g_k(\mathbf{x})\right), \quad \forall k,l\in\mathcal{C}_e,\\
	&\label{eq:dual_dot}\dot{\mu}_k = \sum_{l\in\mathcal{C}_e}\left(\mu_l\phi_l^k(\mathbf{x}) - \mu_k\phi_k^l(\mathbf{x})\right), \quad \forall k\in\mathcal{C}_e.
	\end{align}
\end{subequations}

\begin{rem}
	\label{rem:edm_pdm}
	Notice that, as shown in Fig. \ref{fig:primal_dual_dynamics}, the primal-dual game (\ref{eq:primal_dual_game}) and the primal and dual dynamics (\ref{eq:primal_dynamics})-(\ref{eq:dual_dynamics}) can be seen as two different nonlinear dynamical systems that are interconnected in a positive feedback loop. Furthermore, such a positive feedback loop interconnection of evolutionary dynamics can be regarded as an instance of the EDM-PDM (evolutionary dynamics model - payoff dynamics model) systems proposed in \cite{parkCDC2019}, \cite{parkarxiv2019}. For instance, the primal-dual game (\ref{eq:primal_dual_game}) and the primal dynamics (\ref{eq:primal_dynamics}) can be thought as an EDM that describes the evolution of the primal population state as a function of the dual state $\boldsymbol{\mu}\in\Delta_\mathcal{D}$, and the dual dynamics (\ref{eq:dual_dynamics}) can be seen as a PDM that dynamically modifies the payoffs for the EDM based on the primal state $\mathbf{x}\in\Delta_\mathcal{P}$.
\end{rem}

\begin{rem}
	As mentioned above, in this paper we focus on the class of evolutionary dynamics that result from using impartial pairwise comparison protocols. The motivation being that such a class of dynamics has some desirable properties that we exploit in our theoretical analyses. Nevertheless, it is worth noting that in the literature there are many other classes of evolutionary dynamics that might be worth exploring under the considered primal-dual framework. Some novel examples include generalized imitation dynamics \cite{zino2017}, relative best response dynamics \cite{govaert2019}, and proximal dynamics \cite{grammatico2018}, among others.
\end{rem}

As shown in Fig. \ref{fig:primal_dual_dynamics} and highlighted in Remark \ref{rem:edm_pdm}, the primal-dual game, the primal dynamics, and the dual dynamics, together can be thought as the components of two nonlinear dynamical systems that interact in a positive feedback loop interconnection. Throughout, we refer to such an interconnected system as the primal-dual system, which is an $(n+q+1)$-dimensional system whose state vector is given by $[\mathbf{x}^\top, \boldsymbol{\mu}^\top]^\top\in\mathbb{R}_{\geq0}^{(n+q+1)}$. Moreover, we further impose the following standing assumption regarding the initial conditions of that system.\\

\noindent
\textbf{Standing Assumption 1}\, $\mathbf{x}(0)\in\Delta_\mathcal{P}$ \textit{and} $\boldsymbol{\mu}(0)\in\Delta_\mathcal{D}$.\\

In the forthcoming section, we provide the theoretical analyses regarding such a primal-dual system.

\begin{figure}
	\centering
	\includegraphics[width=0.45\textwidth]{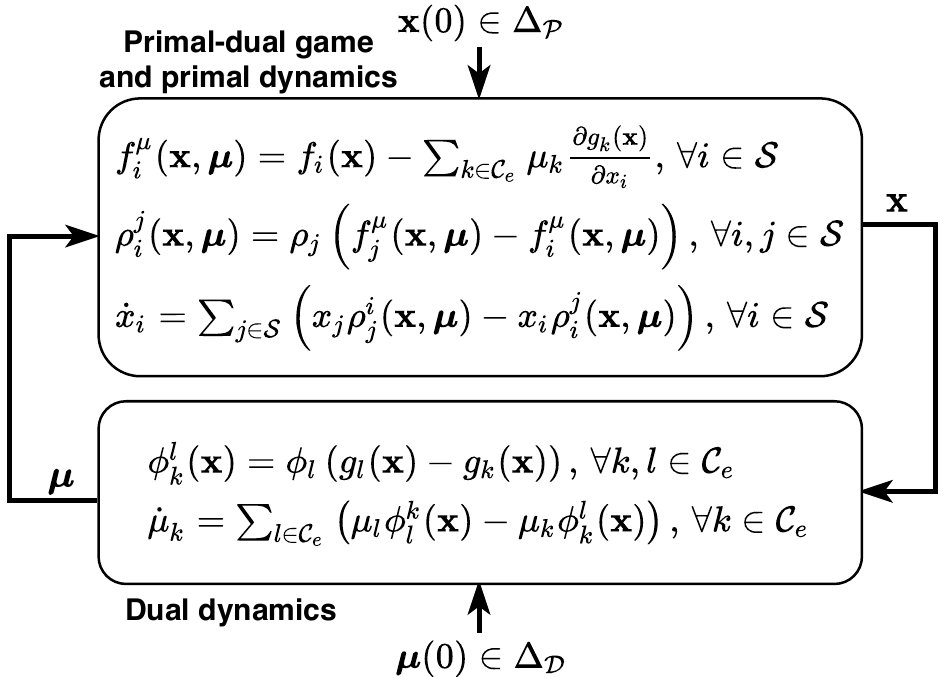}
	\caption{\,Positive feedback loop interconnection of the primal-dual game, the primal dynamics, and the dual dynamics.}
	\label{fig:primal_dual_dynamics}
\end{figure}

\section{Analysis of the proposed primal-dual system}
\label{sec:analysis}
In this section, we provide our main theoretical developments regarding the primal-dual system presented in Section \ref{sec:primal_dual}. First, we show some invariance properties of the considered dynamics. Then, we characterize the equilibria set of the primal-dual system. Finally, we provide our main results regarding the stability of the interconnected nonlinear system.

\subsection{Invariance analysis}
In this section, we characterize some sets that are positively invariant under the considered primal-dual system. Namely, $\Delta_\mathcal{P}$ is positively invariant under the dynamics (\ref{eq:primal_dynamics}), and $\Delta_\mathcal{D}$ is positively invariant under the dynamics (\ref{eq:dual_dynamics}). More precisely, $\mathbf{x}(0)\in\Delta_\mathcal{P} \implies \mathbf{x}(t)\in\Delta_\mathcal{P}$, for all $t\geq0$, and $\boldsymbol{\mu}(0)\in\Delta_\mathcal{D} \implies \boldsymbol{\mu}(t)\in\Delta_\mathcal{D}$, for all $t\geq0$. These invariance properties are formally stated in Lemmas \ref{lem:invariance_primal} and \ref{lem:invariance_dual}, respectively.

\setcounter{thm}{0}
\begin{lem}
	\label{lem:invariance_primal}
	The set $\Delta_\mathcal{P}$ is positively invariant under the primal dynamics (\ref{eq:primal_dynamics}).
\end{lem}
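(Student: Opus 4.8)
The plan is to establish the two defining features of $\Delta_\mathcal{P}$ separately: conservation of the total mass $\sum_{i\in\mathcal{S}}x_i$ and preservation of the nonnegativity constraints $x_i\geq0$. Since each protocol $\rho_j(\cdot)$ is locally Lipschitz and the fitness vector $\mathbf{f}(\cdot)$ is continuous (indeed smooth under Assumption \ref{assump:full_potential_game}), the right-hand side of (\ref{eq:primal_dot}) is locally Lipschitz in $\mathbf{x}$, so for any admissible initial condition a unique local solution exists; this is what renders the invariance arguments below meaningful. Throughout the argument the dual state $\boldsymbol{\mu}(t)$ enters only through the incentives $\rho_i^j(\cdot,\cdot)$, and we shall use nothing about it beyond the fact that these incentives are nonnegative.

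First, I would verify mass conservation. Summing (\ref{eq:primal_dot}) over $i\in\mathcal{S}$ yields
\[
\sum_{i\in\mathcal{S}}\dot{x}_i = \sum_{i\in\mathcal{S}}\sum_{j\in\mathcal{S}}\bigl(x_j\rho_j^i(\mathbf{x},\boldsymbol{\mu}) - x_i\rho_i^j(\mathbf{x},\boldsymbol{\mu})\bigr).
\]
The key observation is that this double sum is antisymmetric under the interchange of the summation indices $i\leftrightarrow j$: relabelling the indices in the first term reproduces exactly the second term, so the two contributions cancel and $\sum_{i\in\mathcal{S}}\dot{x}_i=0$. Hence $\sum_{i\in\mathcal{S}}x_i(t)=\sum_{i\in\mathcal{S}}x_i(0)=m_\mathcal{P}$ for all $t$ in the interval of existence whenever $\mathbf{x}(0)\in\Delta_\mathcal{P}$.

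Next, I would show that the nonnegative orthant is preserved. The decisive point is the boundary (sub-tangentiality) condition: whenever $x_i=0$, the self-loss term $x_i\rho_i^j$ vanishes, leaving
\[
\dot{x}_i\big|_{x_i=0} = \sum_{j\in\mathcal{S}}x_j\rho_j^i(\mathbf{x},\boldsymbol{\mu}) \geq 0,
\]
because every $x_j\geq0$ and every incentive $\rho_j^i(\cdot,\cdot)\geq0$ by (\ref{eq:conditions_protocols_primal}). To turn this into nonnegativity of the whole trajectory without invoking Nagumo's theorem, I would exploit the linear structure of the loss term: along any solution each coordinate obeys the differential inequality $\dot{x}_i\geq -x_i\sum_{j\in\mathcal{S}}\rho_i^j(\mathbf{x},\boldsymbol{\mu})=:-a_i(t)x_i$ with $a_i(t)\geq0$. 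Multiplying by the integrating factor $\exp\!\left(\int_0^t a_i(s)\,ds\right)$ shows $x_i(t)\geq x_i(0)\exp\!\left(-\int_0^t a_i(s)\,ds\right)\geq0$ for $x_i(0)\geq0$. Combining this with the mass conservation established above proves that $\Delta_\mathcal{P}=\mathbb{R}_{\geq0}^n\cap\{\sum_{i\in\mathcal{S}}x_i=m_\mathcal{P}\}$ is positively invariant; since $\Delta_\mathcal{P}$ is compact, forward completeness of the solution follows as well.

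I expect the main obstacle to lie in the rigorous justification of nonnegativity rather than in mass conservation, which is an immediate index swap. The tangency inequality $\dot{x}_i\geq0$ on the face $\{x_i=0\}$ is non-strict, so one cannot argue that an interior trajectory simply never touches the boundary; this is precisely the delicate point that the integrating-factor estimate above resolves cleanly, and it is the step I would write out most carefully.
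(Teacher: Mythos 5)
Your proof follows the same two-step route as the paper's: mass conservation via the index swap in the double sum, and preservation of the nonnegative orthant from the sign structure of (\ref{eq:primal_dot}). The paper's own treatment of the second step is in fact terser than yours: it simply observes that $x_i=0$ implies $\dot{x}_i\geq0$ and concludes invariance of $\mathbb{R}_{\geq0}^n$ from that boundary condition, so your added detail on existence, uniqueness, and forward completeness is welcome.

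There is, however, a residual circularity in the step you single out as the delicate one. The differential inequality $\dot{x}_i\geq -a_i(t)x_i$ is obtained by discarding the gain term $\sum_{j\in\mathcal{S}}x_j\rho_j^i(\mathbf{x},\boldsymbol{\mu})$, and that term is only known to be nonnegative while \emph{all} coordinates $x_j(t)$ are nonnegative --- which is exactly the conclusion being sought. The coordinatewise integrating-factor bound is therefore valid only on the (a priori unknown) interval during which the trajectory has not yet left the orthant, and since the derivative $\dot{x}_i(T)$ may vanish at a boundary touching time $T$, first-order reasoning at $T$ does not rule out a subsequent exit. So the estimate does not ``resolve cleanly'' the tangency issue; it restates it one derivative up. Standard ways to close the gap are: (i) Nagumo's viability theorem applied to $\mathbb{R}_{\geq0}^n$ with your subtangentiality inequality; (ii) the perturbed field $\dot{\mathbf{x}}=F(\mathbf{x})+\epsilon\mathbf{1}$, for which the boundary is reached only with strictly positive inward velocity, followed by $\epsilon\to0$; or (iii) a Gronwall bound on $v(t)=\sum_{i\in\mathcal{S}}\max(-x_i(t),0)$, which satisfies $\dot{v}\leq Cv$ with $v(0)=0$ for a local bound $C$ on the protocols, forcing $v\equiv0$. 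Any of these completes your argument; note that the paper itself asserts the implication without supplying such a step.
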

\begin{proof}
	First note that
	\begin{equation*}
	\begin{split}
	\sum_{i\in\mathcal{S}}\dot{x}_i &= \sum_{i\in\mathcal{S}}\sum_{j\in\mathcal{S}}\left(x_j\rho_j^i(\mathbf{x}, \boldsymbol{\mu}) - x_i\rho_i^j(\mathbf{x}, \boldsymbol{\mu})\right)\\
	&= \sum_{i\in\mathcal{S}}\sum_{j\in\mathcal{S}}x_j\rho_j^i(\mathbf{x}, \boldsymbol{\mu}) -\sum_{i\in\mathcal{S}}\sum_{j\in\mathcal{S}} x_i\rho_i^j(\mathbf{x}, \boldsymbol{\mu})\\
	&= \sum_{i\in\mathcal{S}}\sum_{j\in\mathcal{S}}x_j\rho_j^i(\mathbf{x}, \boldsymbol{\mu}) -\sum_{i\in\mathcal{S}}\sum_{j\in\mathcal{S}} x_j\rho_j^i(\mathbf{x}, \boldsymbol{\mu})\\
	&= 0.
	\end{split}
	\end{equation*}
	Thus, $\sum_{i\in\mathcal{S}}x_i(0) = m_\mathcal{P} \implies \sum_{i\in\mathcal{S}}x_i(t) = m_\mathcal{P}$, for all $t\geq0$. Second, for every $i\in\mathcal{S}$ it holds that if $x_i=0$, then $\dot{x}_i\geq0$. Hence, $\mathbf{x}(0)\in\mathbb{R}_{\geq0}^n \implies \mathbf{x}(t)\in\mathbb{R}_{\geq0}^n$, for all $t\geq0$. Therefore, $\mathbf{x}(0)\in\Delta_\mathcal{P} \implies \mathbf{x}(t)\in\Delta_\mathcal{P}, \,\forall t\geq0$.
\end{proof}

\begin{lem}
	\label{lem:invariance_dual}
	The set $\Delta_\mathcal{D}$ is positively invariant under the dual dynamics (\ref{eq:dual_dynamics}).
\end{lem}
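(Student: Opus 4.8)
The plan is to mirror the proof of Lemma~\ref{lem:invariance_primal} essentially verbatim, since the dual dynamics (\ref{eq:dual_dynamics}) share exactly the same conservative antisymmetric structure as the primal dynamics. Accordingly, the argument splits into two independent parts: preservation of the total mass $m_\mathcal{D}$, and preservation of nonnegativity of the components of $\boldsymbol{\mu}$.

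First, I would establish mass conservation by summing the dynamics over all strategies. Computing $\sum_{k\in\mathcal{C}_e}\dot{\mu}_k$ and splitting the resulting double sum into its inflow and outflow terms, the term $\sum_{k}\sum_{l}\mu_l\phi_l^k(\mathbf{x})$ coincides with the term $\sum_{k}\sum_{l}\mu_k\phi_k^l(\mathbf{x})$ after relabeling the summation indices $k\leftrightarrow l$. Hence the two terms cancel and $\sum_{k\in\mathcal{C}_e}\dot{\mu}_k = 0$, which yields $\sum_{k\in\mathcal{C}_e}\mu_k(0)=m_\mathcal{D} \implies \sum_{k\in\mathcal{C}_e}\mu_k(t)=m_\mathcal{D}$ for all $t\geq 0$.

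Second, I would verify invariance of the nonnegative orthant by checking the boundary condition. Fix any $k\in\mathcal{C}_e$ and suppose $\mu_k=0$. Then every outflow term $-\mu_k\phi_k^l(\mathbf{x})$ vanishes, leaving $\dot{\mu}_k=\sum_{l\in\mathcal{C}_e}\mu_l\phi_l^k(\mathbf{x})$. Since each $\mu_l\geq 0$ and each incentive satisfies $\phi_l^k(\mathbf{x})\geq 0$ by the protocol condition (\ref{eq:conditions_protocols_dual}), it follows that $\dot{\mu}_k\geq 0$ whenever $\mu_k=0$. This prevents any trajectory from crossing the boundary of $\mathbb{R}_{\geq0}^{q+1}$, so $\boldsymbol{\mu}(0)\in\mathbb{R}_{\geq0}^{q+1}\implies\boldsymbol{\mu}(t)\in\mathbb{R}_{\geq0}^{q+1}$ for all $t\geq 0$. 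Combining this with the mass conservation above gives $\boldsymbol{\mu}(0)\in\Delta_\mathcal{D}\implies\boldsymbol{\mu}(t)\in\Delta_\mathcal{D}$ for all $t\geq 0$.

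There is no genuine obstacle in this argument. The only structural ingredients required are the antisymmetric pairwise-comparison form of the dual dynamics and the nonnegative range of the dual protocols $\phi_l(\cdot)$, both of which are built directly into the definitions in (\ref{eq:dual_dynamics}) and (\ref{eq:conditions_protocols_dual}). In particular, the proof needs no assumptions on the constraint functions $g_k(\cdot)$, since those enter the dual dynamics only through the nonnegative incentives and thus cannot affect the conservation or nonnegativity structure.
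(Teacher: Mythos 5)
Your proof is correct and follows exactly the route the paper intends: the paper's own proof of Lemma~\ref{lem:invariance_dual} simply states that the argument is virtually identical to that of Lemma~\ref{lem:invariance_primal}, and you have carried out precisely that argument (index-relabeling cancellation for mass conservation, plus the boundary check $\mu_k=0\implies\dot{\mu}_k\geq0$ for nonnegativity). No gaps.
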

\begin{proof}
	Due to the similarity between (\ref{eq:primal_dynamics}) and (\ref{eq:dual_dynamics}), the proof is virtually identical to the proof of Lemma \ref{lem:invariance_primal}.
\end{proof}

Furthermore, Lemmas \ref{lem:invariance_primal} and \ref{lem:invariance_dual} together lead to the following result that fully characterizes the invariance properties of the proposed primal-dual system.

\setcounter{thm}{0}
\begin{prop}
	\label{prop:invariance}
	The set $\Delta_\mathcal{P}\times\Delta_\mathcal{D}\subset\mathbb{R}_{\geq0}^{n+q+1}$ is positively invariant under the proposed primal-dual system.
\end{prop}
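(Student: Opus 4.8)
The plan is to reduce the claim directly to the two component lemmas, while being careful about the coupling between the primal and dual dynamics. First I would note that the state of the primal-dual system decomposes as $(\mathbf{x},\boldsymbol{\mu})$, where the $\mathbf{x}$-block is governed exactly by the primal dynamics (\ref{eq:primal_dynamics}) and the $\boldsymbol{\mu}$-block by the dual dynamics (\ref{eq:dual_dynamics}). By Standing Assumption 1 the trajectory starts in $\Delta_\mathcal{P}\times\Delta_\mathcal{D}$, so it suffices to show that each block remains in its respective simplex for all $t\geq0$.

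The subtle point, and the one I would treat most carefully, is that the primal dynamics depend on $\boldsymbol{\mu}$ through $f_i^\mu$ and the dual dynamics depend on $\mathbf{x}$, so the two subsystems are genuinely coupled; invariance of each factor in isolation does not a priori yield invariance of the product. To resolve this I would observe that the two ingredients extracted in the proofs of Lemmas \ref{lem:invariance_primal} and \ref{lem:invariance_dual} hold \emph{uniformly} in the coupling argument. Concretely, the identity $\sum_{i\in\mathcal{S}}\dot{x}_i=0$ and the boundary property that $x_i=0$ implies $\dot{x}_i\geq0$ are valid for every fixed $\boldsymbol{\mu}\in\mathbb{R}_{\geq0}^{q+1}$, since the cancellation argument merely re-indexes the double sum and the nonnegativity of $\rho_i^j$ is independent of $\boldsymbol{\mu}$; symmetrically, $\sum_{k\in\mathcal{C}_e}\dot{\mu}_k=0$ and $\mu_k=0\Rightarrow\dot{\mu}_k\geq0$ hold for every fixed $\mathbf{x}$.

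Evaluating these uniform properties along the actual coupled trajectory then closes the argument. The mass-conservation identities give $\sum_{i\in\mathcal{S}}x_i(t)\equiv m_\mathcal{P}$ and $\sum_{k\in\mathcal{C}_e}\mu_k(t)\equiv m_\mathcal{D}$ for all $t\geq0$, while the two inward-pointing conditions at the faces $\{x_i=0\}$ and $\{\mu_k=0\}$ keep both blocks in the nonnegative orthant. Hence $\mathbf{x}(t)\in\Delta_\mathcal{P}$ and $\boldsymbol{\mu}(t)\in\Delta_\mathcal{D}$ for all $t\geq0$, which is exactly $(\mathbf{x}(t),\boldsymbol{\mu}(t))\in\Delta_\mathcal{P}\times\Delta_\mathcal{D}$. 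I expect no genuine obstacle beyond making explicit that Lemmas \ref{lem:invariance_primal} and \ref{lem:invariance_dual} are really statements about each vector field pointwise in the coupling variable, so that their conclusions compose into invariance of the product set.
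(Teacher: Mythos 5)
Your proposal is correct and takes essentially the same route as the paper, which simply deduces the product invariance from Lemmas \ref{lem:invariance_primal} and \ref{lem:invariance_dual}. Your additional observation --- that the mass-conservation and boundary-inflow properties underlying those lemmas hold uniformly in the coupling variable, so the two component invariances legitimately compose along the coupled trajectory --- is a worthwhile explicit justification of a step the paper treats as immediate.
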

\begin{proof}
	The result follows immediately from Lemmas \ref{lem:invariance_primal} and \ref{lem:invariance_dual}. 
\end{proof}

\setcounter{thm}{4}
\begin{rem}
	Note that the Standing Assumption 1, in conjunction with Proposition \ref{prop:invariance}, allows us to assume, without any additional loss of generality, that $\mathbf{x}(t)\in\Delta_\mathcal{P}$ and $\boldsymbol{\mu}(t)\in\Delta_\mathcal{D}$, for all $t\geq0$. This fact plays a crucial role in the forthcoming analyses.
\end{rem}

\subsection{Equilibria set analysis}
We now proceed to characterize the equilibria set of the considered primal-dual system. For such, let us first introduce the concepts of Nash equilibria for the primal, dual, and primal-dual games.

Typically, the set of Nash equilibria of a population game is defined as the set of population states where no player has incentives to change her strategy \cite{sandholm2010}. With this concept in mind, the set of Nash equlibria of the primal game $\mathbf{f}(\cdot)$ can be defined as
\begin{equation}
\label{eq:nash_primal}
\text{NE}\left(\mathbf{f}\right) = \left\{\mathbf{x}\in\Delta_\mathcal{P}\,:\,\mathbf{x}\in\text{arg}\max_{\mathbf{y}\in\Delta_\mathcal{P}}\mathbf{y}^\top\mathbf{f}(\mathbf{x})\right\}.
\end{equation}
In contrast, the Nash equilibria of the dual game $\mathbf{g}(\cdot)$ is a function of the primal state $\mathbf{x}\in\Delta_\mathcal{P}$. More precisely,
\begin{equation}
\label{eq:nash_dual}
\text{NE}\left(\mathbf{g}, \mathbf{x}\right) = \left\{\boldsymbol{\mu}\in\Delta_\mathcal{D}\,:\,\boldsymbol{\mu}\in\text{arg}\max_{\mathbf{z}\in\Delta_\mathcal{D}}\mathbf{z}^\top\mathbf{g}(\mathbf{x})\right\}.
\end{equation}
Similarly, the Nash equilibria of the primal-dual game $\mathbf{f}^\mu(\cdot, \cdot)$ is a function of the dual state $\boldsymbol{\mu}\in\Delta_\mathcal{D}$. Namely,
\begin{equation}
\label{eq:nash_primal_dual}
\text{NE}\left(\mathbf{f}^\mu, \boldsymbol{\mu}\right) = \left\{\mathbf{x}\in\Delta_\mathcal{P}\,:\,\mathbf{x}\in\text{arg}\max_{\mathbf{y}\in\Delta_\mathcal{P}}\mathbf{y}^\top\mathbf{f}^\mu(\mathbf{x}, \boldsymbol{\mu})\right\}.
\end{equation}
Therefore, for any fixed $\boldsymbol{\mu}^*\in\Delta_\mathcal{D}$, the set $\text{NE}\left(\mathbf{f}^\mu, \boldsymbol{\mu}^*\right)$ contains the Nash equilibria, in the sense of (\ref{eq:nash_primal}), of the population game $\mathbf{f}^\mu(\cdot, \boldsymbol{\mu}^*)$. These definitions for the set of Nash equilibria of the dual and primal-dual games allow us to provide the following results.

\setcounter{thm}{2}
\begin{lem}
	\label{lem:dual_nash_stationarity}
	Suppose that $\mathbf{g}(\cdot)$ is continuous. Consider the dual dynamics (\ref{eq:dual_dynamics}), let $\mathbf{\dot{\boldsymbol{\mu}}}=\left[\dot{\mu}_k\right]\in\mathbb{R}^{q+1}$, and let $\mathbf{x}^*\in\Delta_\mathcal{P}$. Then, $\text{\normalfont NE}\left(\mathbf{g}, \mathbf{x}^*\right)$ is nonempty, and $\mathbf{\dot{\boldsymbol{\mu}}}=\mathbf{0}$ if and only if $\boldsymbol{\mu}\in\text{\normalfont NE}\left(\mathbf{g}, \mathbf{x}^*\right)$.
\end{lem}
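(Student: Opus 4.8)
The plan is to split the statement into three pieces---nonemptiness of $\text{NE}(\mathbf{g}, \mathbf{x}^*)$, the ``if'' direction, and the ``only if'' direction---all resting on the single observation that, for fixed $\mathbf{x}^*$, the vector $\mathbf{g}(\mathbf{x}^*)$ is a constant, so $\text{NE}(\mathbf{g}, \mathbf{x}^*)$ is nothing but the set of maximizers of the \emph{linear} functional $\mathbf{z}\mapsto\mathbf{z}^\top\mathbf{g}(\mathbf{x}^*)$ over the compact simplex $\Delta_\mathcal{D}$.

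First, for nonemptiness I would invoke the Weierstrass extreme value theorem: the map $\mathbf{z}\mapsto\mathbf{z}^\top\mathbf{g}(\mathbf{x}^*)$ is continuous and $\Delta_\mathcal{D}$ is nonempty and compact, so the maximum is attained and the maximizer set is nonempty. I would then record the explicit characterization that drives the rest of the argument: writing $g^* = \max_{k\in\mathcal{C}_e} g_k(\mathbf{x}^*)$ and $\mathcal{C}^* = \{k\in\mathcal{C}_e : g_k(\mathbf{x}^*) = g^*\}$, the mass constraint $\sum_{k\in\mathcal{C}_e}\mu_k = m_\mathcal{D}$ yields $\mathbf{z}^\top\mathbf{g}(\mathbf{x}^*)\leq m_\mathcal{D}\,g^*$, with equality precisely when all mass sits on optimal strategies. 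Hence $\boldsymbol{\mu}\in\text{NE}(\mathbf{g}, \mathbf{x}^*)$ if and only if $\mu_k>0\implies k\in\mathcal{C}^*$.

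For the ``if'' direction, assuming $\boldsymbol{\mu}\in\text{NE}(\mathbf{g}, \mathbf{x}^*)$, I would show every summand of $\dot{\mu}_k$ vanishes. Expanding (\ref{eq:dual_dot}) through (\ref{eq:dual_memoryless}) produces outflow terms $\mu_k\phi_l(g_l(\mathbf{x}^*)-g_k(\mathbf{x}^*))$ and inflow terms $\mu_l\phi_k(g_k(\mathbf{x}^*)-g_l(\mathbf{x}^*))$. In an outflow term, $\mu_k>0$ forces $k\in\mathcal{C}^*$, so $g_k(\mathbf{x}^*)=g^*\geq g_l(\mathbf{x}^*)$, whence the argument of $\phi_l$ is nonpositive and $\phi_l(\cdot)=0$ by (\ref{eq:conditions_protocols_dual}); the symmetric argument (swapping the roles of $k$ and $l$) annihilates the inflow terms whenever $\mu_l>0$. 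Thus $\dot{\mu}_k=0$ for every $k\in\mathcal{C}_e$, i.e., $\dot{\boldsymbol{\mu}}=\mathbf{0}$.

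For the ``only if'' direction I would argue by contraposition. If $\boldsymbol{\mu}\in\Delta_\mathcal{D}\setminus\text{NE}(\mathbf{g}, \mathbf{x}^*)$, then by the characterization some suboptimal strategy carries mass, i.e., there is $l\notin\mathcal{C}^*$ with $\mu_l>0$ and $g_l(\mathbf{x}^*)<g^*$. Choosing any $k^*\in\mathcal{C}^*$ (nonempty, being a finite maximum), the outflow from $k^*$ vanishes since $g_{k^*}(\mathbf{x}^*)=g^*\geq g_{l'}(\mathbf{x}^*)$ for all $l'$, while the inflow to $k^*$ contains the strictly positive term $\mu_l\phi_{k^*}(g^*-g_l(\mathbf{x}^*))>0$ with all remaining inflow terms nonnegative; hence $\dot{\mu}_{k^*}>0$ and $\dot{\boldsymbol{\mu}}\neq\mathbf{0}$. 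The main obstacle---really the only step requiring care---is precisely this asymmetry: that optimal strategies emit no outflow yet can receive strictly positive inflow. This is exactly what the sign conditions (\ref{eq:conditions_protocols_dual}) on the impartial pairwise comparison protocols guarantee, and it is what pins the rest points of the dual dynamics to the Nash set.
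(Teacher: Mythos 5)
Your proposal is correct and follows essentially the same route as the paper's proof: Weierstrass for nonemptiness, the support-on-maximizers characterization of $\text{NE}(\mathbf{g},\mathbf{x}^*)$ to kill every product $\mu_k\phi_k^l(\mathbf{x}^*)$ in the sufficiency direction, and, for necessity, isolating a maximizing strategy $k^*$ whose outflow vanishes while a mass-bearing suboptimal strategy feeds it strictly positive inflow. The only cosmetic difference is that you argue by contraposition where the paper phrases it as a contradiction, and you make the linear-programming-over-the-simplex characterization explicit where the paper leaves it implicit.
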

\begin{proof}
	The claim that $\text{NE}\left(\mathbf{g}, \mathbf{x}^*\right)\neq\emptyset$ follows from the continuity of $\mathbf{g}(\cdot)$, the compactness of $\Delta_\mathcal{D}$, and the Weierstrass Theorem. To prove the second claim, let us consider the sufficient and necessary cases separately.
	
	(Sufficiency) Let $\boldsymbol{\mu}\in\text{NE}\left(\mathbf{g},\mathbf{x}^*\right)$. From (\ref{eq:nash_dual}), it holds that $\mu_k>0\implies g_k(\mathbf{x}^*)=\max_{l\in\mathcal{C}_e}g_l(\mathbf{x}^*)$, for all $k\in\mathcal{C}_e$. Thus, from (\ref{eq:conditions_protocols_dual}) and (\ref{eq:dual_memoryless}) it follows that $\mu_k\phi_k^l(\mathbf{x}^*)=0$, for all $k,l\in\mathcal{C}_e$. Hence, $\mathbf{\dot{\boldsymbol{\mu}}}=\mathbf{0}$.
	
	(Necessity) Suppose that $\mathbf{\dot{\boldsymbol{\mu}}}=\mathbf{0}$ but $\boldsymbol{\mu}\notin\text{NE}\left(\mathbf{g}, \mathbf{x}^*\right)$. Let $k\in\mathcal{C}_e$ be such that $g_k(\mathbf{x}^*)=\max_{l\in\mathcal{C}_e}g_l(\mathbf{x}^*)$. Thus, from (\ref{eq:conditions_protocols_dual}) and (\ref{eq:dual_memoryless}) it follows that $\mu_k\phi_k^l(\mathbf{x}^*) = 0$, for all $l\in\mathcal{C}_e$. Hence, $\dot{\mu}_k\geq0$. Now, since $\boldsymbol{\mu}\notin\text{NE}\left(\mathbf{g}, \mathbf{x}^*\right)$, there exists some $z\in\mathcal{C}_e$ such that $\mu_z>0$ and $g_z(\mathbf{x}^*) < g_k(\mathbf{x}^*)$. Therefore, $x_z\phi_z^k(\mathbf{x}^*)>0$ and $\dot{\mu}_k>0$. In consequence, $\mathbf{\dot{\boldsymbol{\mu}}}\neq\mathbf{0}$, which is a contradiction. 
\end{proof}

\begin{lem}
	\label{lem:primal_dual_nash_stationarity}
	Suppose that $\mathbf{f}^\mu(\cdot, \cdot)$ is continuous. Consider the primal dynamics (\ref{eq:primal_dynamics}), let $\mathbf{\dot{x}}=\left[\dot{x}_i\right]\in\mathbb{R}^{n}$, and let $\boldsymbol{\mu}^*\in\Delta_\mathcal{D}$. Then, $\text{\normalfont NE}\left(\mathbf{f}^\mu, \boldsymbol{\mu}^*\right)$ is nonempty, and $\mathbf{\dot{x}}=\mathbf{0}$ if and only if $\mathbf{x}\in\text{\normalfont NE}\left(\mathbf{f}^\mu, \boldsymbol{\mu}^*\right)$.
\end{lem}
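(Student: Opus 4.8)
The plan is to follow the template of Lemma~\ref{lem:dual_nash_stationarity}, splitting the argument into the nonemptiness claim and the stationarity equivalence, handling the equivalence exactly as in the dual case and treating nonemptiness separately, since it is here that the primal-dual setting genuinely departs from the dual one. Throughout I would use the elementary characterization of the best-response set over the scaled simplex: because $\max_{\mathbf{y}\in\Delta_\mathcal{P}}\mathbf{y}^\top\mathbf{f}^\mu(\mathbf{x},\boldsymbol{\mu}^*)$ is a linear program, a state $\mathbf{x}\in\Delta_\mathcal{P}$ lies in $\text{NE}(\mathbf{f}^\mu,\boldsymbol{\mu}^*)$ if and only if, for every $i\in\mathcal{S}$, $x_i>0 \implies f_i^\mu(\mathbf{x},\boldsymbol{\mu}^*)=\max_{j\in\mathcal{S}}f_j^\mu(\mathbf{x},\boldsymbol{\mu}^*)$.

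For the equivalence, the two directions mirror the dual proof almost verbatim. For sufficiency, if $\mathbf{x}\in\text{NE}(\mathbf{f}^\mu,\boldsymbol{\mu}^*)$ then whenever $x_i>0$ one has $f_i^\mu=\max_{j\in\mathcal{S}}f_j^\mu$, so $f_j^\mu-f_i^\mu\leq 0$ and (\ref{eq:conditions_protocols_primal}) forces $\rho_i^j(\mathbf{x},\boldsymbol{\mu}^*)=0$ for every $j$; hence $x_i\rho_i^j(\mathbf{x},\boldsymbol{\mu}^*)=0$ for all $i,j\in\mathcal{S}$, every summand in (\ref{eq:primal_dot}) vanishes, and $\dot{\mathbf{x}}=\mathbf{0}$. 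For necessity, I would argue by contradiction: assuming $\dot{\mathbf{x}}=\mathbf{0}$ but $\mathbf{x}\notin\text{NE}(\mathbf{f}^\mu,\boldsymbol{\mu}^*)$, pick a maximal-payoff strategy $i$ with $f_i^\mu=\max_{j\in\mathcal{S}}f_j^\mu$, note that its outgoing flow $x_i\rho_i^j$ vanishes by (\ref{eq:conditions_protocols_primal}) so that $\dot{x}_i=\sum_{j\in\mathcal{S}}x_j\rho_j^i\geq 0$, and then use the failure of the Nash condition to produce some $z\in\mathcal{S}$ with $x_z>0$ and $f_z^\mu<f_i^\mu$, which makes $x_z\rho_z^i(\mathbf{x},\boldsymbol{\mu}^*)>0$ and hence $\dot{x}_i>0$, the desired contradiction. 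This step is routine once the support characterization above is in place.

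The genuinely new ingredient, and the part I expect to be the main obstacle, is nonemptiness of $\text{NE}(\mathbf{f}^\mu,\boldsymbol{\mu}^*)$. Unlike the dual game in Lemma~\ref{lem:dual_nash_stationarity}, where the payoff vector $\mathbf{g}(\mathbf{x}^*)$ is a fixed constant and the Weierstrass theorem applies directly to a linear objective, here the payoff $\mathbf{f}^\mu(\mathbf{x},\boldsymbol{\mu}^*)$ depends on the very state $\mathbf{x}$ being optimized, so membership in the Nash set is a fixed-point condition $\mathbf{x}\in\text{arg}\max_{\mathbf{y}\in\Delta_\mathcal{P}}\mathbf{y}^\top\mathbf{f}^\mu(\mathbf{x},\boldsymbol{\mu}^*)$ rather than a plain maximization. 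I would therefore invoke the classical existence theorem for Nash equilibria of continuous population games \cite{sandholm2010}: using continuity of $\mathbf{f}^\mu(\cdot,\boldsymbol{\mu}^*)$, the best-response correspondence $\mathbf{x}\mapsto\text{arg}\max_{\mathbf{y}\in\Delta_\mathcal{P}}\mathbf{y}^\top\mathbf{f}^\mu(\mathbf{x},\boldsymbol{\mu}^*)$ is nonempty, convex-valued, and upper hemicontinuous on the nonempty compact convex set $\Delta_\mathcal{P}$, so Kakutani's fixed-point theorem yields a fixed point, i.e.\ an element of $\text{NE}(\mathbf{f}^\mu,\boldsymbol{\mu}^*)$. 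Alternatively, under Assumptions~\ref{assump:full_potential_game} and \ref{assump:convex_constraints} one can bypass the fixed-point argument: for fixed $\boldsymbol{\mu}^*\in\Delta_\mathcal{D}$ the game $\mathbf{f}^\mu(\cdot,\boldsymbol{\mu}^*)$ is itself a full-potential game with concave potential $\mathbf{x}\mapsto p(\mathbf{x})-\sum_{k\in\mathcal{C}_e}\mu_k^* g_k(\mathbf{x})$, whose maximizers over the compact set $\Delta_\mathcal{P}$ exist by Weierstrass and coincide with its Nash equilibria, giving nonemptiness at once.
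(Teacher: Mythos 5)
Your proof is correct, and for the stationarity equivalence it follows exactly the route the paper takes: the paper's own proof of this lemma is a one-liner stating that, once one notes the support characterization $x_i>0\implies f_i^\mu(\mathbf{x},\boldsymbol{\mu}^*)=\max_{j\in\mathcal{S}}f_j^\mu(\mathbf{x},\boldsymbol{\mu}^*)$, the argument is ``virtually identical'' to that of Lemma~\ref{lem:dual_nash_stationarity}. Where you genuinely depart from the paper --- and improve on it --- is the nonemptiness claim. In Lemma~\ref{lem:dual_nash_stationarity} the payoff vector $\mathbf{g}(\mathbf{x}^*)$ is a constant, so $\text{NE}(\mathbf{g},\mathbf{x}^*)$ is a plain $\arg\max$ of a continuous function over the compact set $\Delta_\mathcal{D}$ and Weierstrass applies directly; in the present lemma the payoff $\mathbf{f}^\mu(\mathbf{x},\boldsymbol{\mu}^*)$ depends on the state being optimized, so membership in $\text{NE}(\mathbf{f}^\mu,\boldsymbol{\mu}^*)$ is a fixed-point condition and the Weierstrass argument does not transfer as-is. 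The paper's blanket deferral silently glosses over this; your explicit appeal to Kakutani's fixed-point theorem (via the nonempty, convex-valued, upper hemicontinuous best-response correspondence), or alternatively to the concave-potential structure of $\mathbf{f}^\mu(\cdot,\boldsymbol{\mu}^*)$ under Assumptions~\ref{assump:full_potential_game} and~\ref{assump:convex_constraints} (which is exactly the device the paper later uses in Lemma~\ref{lem:lagrangian_saddle_points}), is the correct way to close that gap. The only caveat is that the lemma as stated assumes only continuity of $\mathbf{f}^\mu$, so the Kakutani route is the one that matches the stated hypotheses; the potential-function shortcut requires the extra assumptions you flag.
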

\begin{proof}
	Noting from (\ref{eq:nash_primal_dual}) that $\mathbf{x}\in\text{NE}\left(\mathbf{f}^\mu, \boldsymbol{\mu}^*\right)$ implies that $x_i>0\implies f_i^\mu(\mathbf{x}, \boldsymbol{\mu}^*)=\max_{j\in\mathcal{S}}f_j^\mu(\mathbf{x}, \boldsymbol{\mu}^*)$, for all $i\in\mathcal{S}$, the proof is virtually identical to the one of Lemma \ref{lem:dual_nash_stationarity}.
\end{proof}

Lemmas \ref{lem:dual_nash_stationarity} and \ref{lem:primal_dual_nash_stationarity} characterize a property of the considered dual and primal dynamics termed as Nash stationarity \cite{sandholm2010}. Namely, the equilibria set of the considered dual dynamics coincides with the set of Nash equilibria of the dual game $\mathbf{g}(\cdot)$, whilst the equilibria set of the primal dynamics coincides with the set of Nash equilibria of the primal-dual game $\mathbf{f}^\mu(\cdot, \cdot)$. Consequently, Lemmas \ref{lem:dual_nash_stationarity} and \ref{lem:primal_dual_nash_stationarity} allow us to fully characterize the equilibria set of the considered primal-dual system.

\setcounter{thm}{0}
\begin{thm}
	\label{thm:equilibria_set_definition}
	Consider the primal-dual system (\ref{eq:primal_dynamics})-(\ref{eq:dual_dynamics}), and suppose that $\mathbf{g}(\cdot)$ and $\mathbf{f}^\mu(\cdot, \cdot)$ are continuous. Then, a point $(\mathbf{x}^*, \boldsymbol{\mu}^*)\in\Delta_{\mathcal{P}}\times\Delta_{\mathcal{D}}$ is an equilibrium state of the primal-dual system if and only if $(\mathbf{x}^*, \boldsymbol{\mu}^*)\in\mathcal{E}$, where
	\begin{equation}
	\label{eq:equilibria_set}
	\mathcal{E} = \left\{(\mathbf{x}, \boldsymbol{\mu})\in\Delta_\mathcal{P}\times\Delta_\mathcal{D}\,:\,\begin{array}{c}\mathbf{x}\in\text{\normalfont NE}\left(\mathbf{f}^\mu, \boldsymbol{\mu}\right)\\
	\boldsymbol{\mu}\in\text{\normalfont NE}\left(\mathbf{g}, \mathbf{x}\right)
	\end{array}\right\}.
	\end{equation}
\end{thm}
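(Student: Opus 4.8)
The plan is to decouple the equilibrium condition of the full primal-dual system into separate stationarity conditions for the primal and dual components, and then to invoke Lemmas \ref{lem:dual_nash_stationarity} and \ref{lem:primal_dual_nash_stationarity} essentially verbatim. By definition, a point $(\mathbf{x}^*, \boldsymbol{\mu}^*) \in \Delta_\mathcal{P} \times \Delta_\mathcal{D}$ is an equilibrium state of the primal-dual system precisely when the right-hand sides of both (\ref{eq:primal_dot}) and (\ref{eq:dual_dot}) vanish at that point, i.e., when $\mathbf{\dot{x}} = \mathbf{0}$ and $\mathbf{\dot{\boldsymbol{\mu}}} = \mathbf{0}$ simultaneously.

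First I would observe that the primal velocity $\mathbf{\dot{x}}$ defined by (\ref{eq:primal_dynamics}), evaluated at the fixed state $(\mathbf{x}^*, \boldsymbol{\mu}^*)$, coincides exactly with the velocity of the parametrized primal dynamics obtained by freezing the dual state at $\boldsymbol{\mu} = \boldsymbol{\mu}^*$; this is immediate because the right-hand side of (\ref{eq:primal_dot}) depends on $\boldsymbol{\mu}$ only through the instantaneous value $f_i^\mu(\mathbf{x}, \boldsymbol{\mu})$ and not on how $\boldsymbol{\mu}$ evolves in time. Hence Lemma \ref{lem:primal_dual_nash_stationarity} applies with $\boldsymbol{\mu}^*$ held fixed, giving $\mathbf{\dot{x}} = \mathbf{0}$ if and only if $\mathbf{x}^* \in \text{NE}(\mathbf{f}^\mu, \boldsymbol{\mu}^*)$. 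Symmetrically, the dual velocity $\mathbf{\dot{\boldsymbol{\mu}}}$ in (\ref{eq:dual_dot}) depends on the state only through $g_k(\mathbf{x})$, so freezing $\mathbf{x} = \mathbf{x}^*$ and applying Lemma \ref{lem:dual_nash_stationarity} yields $\mathbf{\dot{\boldsymbol{\mu}}} = \mathbf{0}$ if and only if $\boldsymbol{\mu}^* \in \text{NE}(\mathbf{g}, \mathbf{x}^*)$.

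Combining the two equivalences, $(\mathbf{x}^*, \boldsymbol{\mu}^*)$ is an equilibrium if and only if both $\mathbf{x}^* \in \text{NE}(\mathbf{f}^\mu, \boldsymbol{\mu}^*)$ and $\boldsymbol{\mu}^* \in \text{NE}(\mathbf{g}, \mathbf{x}^*)$ hold, which is exactly the defining condition of the set $\mathcal{E}$ in (\ref{eq:equilibria_set}). The continuity hypotheses on $\mathbf{g}(\cdot)$ and $\mathbf{f}^\mu(\cdot, \cdot)$ are precisely those required by Lemmas \ref{lem:dual_nash_stationarity} and \ref{lem:primal_dual_nash_stationarity}, so both invocations are legitimate and, as a byproduct, guarantee that the relevant Nash sets are nonempty. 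I do not expect a serious obstacle: the only point requiring care is the observation that the coupled system's velocity at a single state agrees with the decoupled (parametrized) velocities at that same state, which licenses the pointwise application of the two stationarity lemmas. Once this is noted, the theorem follows as an immediate conjunction of Lemmas \ref{lem:dual_nash_stationarity} and \ref{lem:primal_dual_nash_stationarity}.
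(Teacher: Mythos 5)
Your proof is correct and takes essentially the same route as the paper, whose entire proof is the one-line statement that the result follows immediately from Lemmas \ref{lem:dual_nash_stationarity} and \ref{lem:primal_dual_nash_stationarity}. Your added observation---that the coupled system's velocities at a fixed state coincide with the parametrized (frozen-argument) velocities, which licenses the pointwise application of the two stationarity lemmas---is precisely the implicit justification behind that one-liner, so nothing is missing.
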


\begin{proof}
	The result follows immediately from Lemmas \ref{lem:dual_nash_stationarity} and \ref{lem:primal_dual_nash_stationarity}.
\end{proof}

Although Theorem \ref{thm:equilibria_set_definition} provides necessary and sufficient conditions for a state $\left[\mathbf{x}^\top,\boldsymbol{\mu}^\top\right]^\top\in\mathbb{R}_{\geq0}^{n+q+1}$ to be an equilibrium point of the primal-dual system, it does not guarantee the existence of such an equilibrium state. Moreover, Theorem \ref{thm:equilibria_set_definition} by itself does not guarantee that the equilibria set $\mathcal{E}$ satisfies the considered constraints $\{g_k(\cdot)\}_{k\in\mathcal{C}}$. To address both of these issues, we provide the following results.

\setcounter{thm}{4}
\begin{lem}
	\label{lem:lagrangian_saddle_points}
	Let Assumptions \ref{assump:full_potential_game} and \ref{assump:convex_constraints} hold. Consider the primal-dual system (\ref{eq:primal_dynamics})-(\ref{eq:dual_dynamics}), the equilibria set $\mathcal{E}$ in (\ref{eq:equilibria_set}), and the function
	\begin{equation}
	\label{eq:lagrangian_extended}
	L(\mathbf{x}, \boldsymbol{\mu}) = p(\mathbf{x}) - \sum_{k\in\mathcal{C}_e}\mu_kg_k(\mathbf{x}).
	\end{equation}
	Then, $\mathcal{E}$ coincides with the set of saddle points of (\ref{eq:lagrangian_extended}). More precisely, $(\mathbf{x}^*, \boldsymbol{\mu}^*)\in\mathcal{E}$ if and only if
	\begin{equation*}
	L(\mathbf{x}, \boldsymbol{\mu}^*) \leq L(\mathbf{x}^*, \boldsymbol{\mu}^*) \leq L(\mathbf{x}^*, \boldsymbol{\mu}), \,\,\, \forall \mathbf{x}\in\Delta_{\mathcal{P}}, \,\forall\boldsymbol{\mu}\in\Delta_{\mathcal{D}}.
	\end{equation*}
\end{lem}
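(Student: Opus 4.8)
The plan is to exploit the full-potential structure to rewrite the primal-dual payoff as a gradient of $L$, and then to verify the two inequalities in the saddle-point condition separately, matching each one to a corresponding Nash condition in the definition (\ref{eq:equilibria_set}) of $\mathcal{E}$. First I would record the two structural facts that drive the argument. By Assumption \ref{assump:full_potential_game} we have $\mathbf{f}(\mathbf{x})=\nabla_{\mathbf{x}}p(\mathbf{x})$, so substituting into (\ref{eq:primal_dual_game}) gives $\mathbf{f}^\mu(\mathbf{x},\boldsymbol{\mu})=\nabla_{\mathbf{x}}p(\mathbf{x})-\sum_{k\in\mathcal{C}_e}\mu_k\nabla_{\mathbf{x}}g_k(\mathbf{x})=\nabla_{\mathbf{x}}L(\mathbf{x},\boldsymbol{\mu})$; that is, the primal-dual payoff vector is precisely the gradient of $L$ in $\mathbf{x}$. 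Moreover, for any fixed $\boldsymbol{\mu}\in\Delta_\mathcal{D}$ we have $\mu_k\geq0$, so by Assumptions \ref{assump:full_potential_game} and \ref{assump:convex_constraints} the map $\mathbf{x}\mapsto L(\mathbf{x},\boldsymbol{\mu})$ is concave (a concave $p$ minus a nonnegative combination of convex $g_k$), while for any fixed $\mathbf{x}$ the map $\boldsymbol{\mu}\mapsto L(\mathbf{x},\boldsymbol{\mu})$ is affine. These two observations let me treat the left and right inequalities of the saddle-point condition independently.

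For the right inequality I would note that $L(\mathbf{x}^*,\boldsymbol{\mu})=p(\mathbf{x}^*)-\boldsymbol{\mu}^\top\mathbf{g}(\mathbf{x}^*)$, so, since $p(\mathbf{x}^*)$ is constant in $\boldsymbol{\mu}$, minimizing $L(\mathbf{x}^*,\cdot)$ over $\Delta_\mathcal{D}$ is equivalent to maximizing $\boldsymbol{\mu}^\top\mathbf{g}(\mathbf{x}^*)$ over $\Delta_\mathcal{D}$. By definition (\ref{eq:nash_dual}) this is exactly $\boldsymbol{\mu}^*\in\text{NE}(\mathbf{g},\mathbf{x}^*)$; hence $L(\mathbf{x}^*,\boldsymbol{\mu}^*)\leq L(\mathbf{x}^*,\boldsymbol{\mu})$ for all $\boldsymbol{\mu}\in\Delta_\mathcal{D}$ if and only if $\boldsymbol{\mu}^*\in\text{NE}(\mathbf{g},\mathbf{x}^*)$. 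This direction uses only the affine dependence on $\boldsymbol{\mu}$ and requires no convexity argument.

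The main work is the left inequality. Using the definition (\ref{eq:nash_primal_dual}) of $\text{NE}(\mathbf{f}^\mu,\boldsymbol{\mu}^*)$, I would first rewrite the condition $\mathbf{x}^*\in\text{NE}(\mathbf{f}^\mu,\boldsymbol{\mu}^*)$ in variational-inequality form, namely $(\mathbf{y}-\mathbf{x}^*)^\top\mathbf{f}^\mu(\mathbf{x}^*,\boldsymbol{\mu}^*)\leq0$ for all $\mathbf{y}\in\Delta_\mathcal{P}$, which by the gradient identity above is exactly $(\mathbf{y}-\mathbf{x}^*)^\top\nabla_{\mathbf{x}}L(\mathbf{x}^*,\boldsymbol{\mu}^*)\leq0$ for all $\mathbf{y}\in\Delta_\mathcal{P}$. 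This is precisely the first-order optimality condition for $\mathbf{x}^*$ to maximize $L(\cdot,\boldsymbol{\mu}^*)$ over the convex set $\Delta_\mathcal{P}$. The crux is then to invoke concavity of $L(\cdot,\boldsymbol{\mu}^*)$: this condition is necessary for a maximizer (any maximizer has nonpositive directional derivatives along feasible directions) and, because $L(\cdot,\boldsymbol{\mu}^*)$ is concave, also sufficient (the concavity inequality $L(\mathbf{x},\boldsymbol{\mu}^*)\leq L(\mathbf{x}^*,\boldsymbol{\mu}^*)+(\mathbf{x}-\mathbf{x}^*)^\top\nabla_{\mathbf{x}}L(\mathbf{x}^*,\boldsymbol{\mu}^*)$ forces $L(\mathbf{x},\boldsymbol{\mu}^*)\leq L(\mathbf{x}^*,\boldsymbol{\mu}^*)$). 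Hence $L(\mathbf{x},\boldsymbol{\mu}^*)\leq L(\mathbf{x}^*,\boldsymbol{\mu}^*)$ for all $\mathbf{x}\in\Delta_\mathcal{P}$ if and only if $\mathbf{x}^*\in\text{NE}(\mathbf{f}^\mu,\boldsymbol{\mu}^*)$.

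Combining the two equivalences shows that $(\mathbf{x}^*,\boldsymbol{\mu}^*)$ is a saddle point of $L$ if and only if both Nash conditions appearing in (\ref{eq:equilibria_set}) hold, that is, if and only if $(\mathbf{x}^*,\boldsymbol{\mu}^*)\in\mathcal{E}$. I expect the sufficiency half of the primal step — turning the stationarity/variational inequality back into a global maximum — to be the only place where concavity is genuinely needed, and thus the main obstacle; everything else reduces to bookkeeping with the linear Nash payoffs over $\Delta_\mathcal{P}$ and $\Delta_\mathcal{D}$.
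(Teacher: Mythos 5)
Your proof is correct and follows essentially the same route as the paper: identify $\mathbf{f}^\mu(\cdot,\boldsymbol{\mu})$ as $\nabla_{\mathbf{x}}L(\cdot,\boldsymbol{\mu})$, note the concavity of $L(\cdot,\boldsymbol{\mu})$ and the affine dependence of $L(\mathbf{x},\cdot)$ on $\boldsymbol{\mu}$, and reduce each of the two saddle inequalities to the corresponding Nash condition in (\ref{eq:equilibria_set}). The only cosmetic difference is that the paper delegates the equivalence between Nash equilibria of a concave full-potential game and maximizers of its potential to a cited result (\cite[Corollary 3.1.4]{sandholm2010}), whereas you prove that equivalence inline via the variational inequality and the first-order optimality condition for concave maximization over a convex set.
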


\begin{proof}
	Note that $\nabla_{\mathbf{x}}L(\mathbf{x}, \boldsymbol{\mu}) = \mathbf{f}^\mu(\mathbf{x}, \boldsymbol{\mu})$, and $\nabla_{\boldsymbol{\mu}}L(\mathbf{x}, \boldsymbol{\mu}) = -\mathbf{g}(\mathbf{x})$. Thus, from Definition \ref{def:full_potential_games}, for every fixed $\boldsymbol{\mu}^*\in\Delta_{\mathcal{D}}$, the game $\mathbf{f}^\mu(\cdot, \boldsymbol{\mu}^*)$ is a full-potential game over $\Delta_\mathcal{P}$ with concave potential function $L(\cdot, \boldsymbol{\mu}^*)$. Similarly, for every fixed $\mathbf{x}^*\in\Delta_{\mathcal{P}}$, the game $\mathbf{g}(\mathbf{x}^*)$ is a full-potential game over $\Delta_\mathcal{D}$ with affine (and thus concave) potential function $-L(\mathbf{x}^*, \cdot)$. Therefore, using \cite[Corollary 3.1.4]{sandholm2010} we conclude that $\mathbf{x}^*\in\text{NE}\left(\mathbf{f}^\mu, \boldsymbol{\mu}^*\right)$ if and only if $\mathbf{x}^*\in\text{arg}\max_{\mathbf{x}\in\Delta_\mathcal{P}}L(\cdot, \boldsymbol{\mu}^*)$, and $\boldsymbol{\mu}^*\in\text{NE}\left(\mathbf{g}, \mathbf{x}^*\right)$ if and only if $\boldsymbol{\mu}^*\in\text{arg}\min_{\boldsymbol{\mu}\in\Delta_\mathcal{D}}L(\mathbf{x}^*, \cdot)$. Hence,
	\begin{equation*}
	\begin{split}
	&\mathbf{x}^*\in\text{NE}\left(\mathbf{f}^\mu, \boldsymbol{\mu}^*\right) \iff L(\mathbf{x}, \boldsymbol{\mu}^*) \leq L(\mathbf{x}^*, \boldsymbol{\mu}^*), \,\forall \mathbf{x}\in\Delta_{\mathcal{P}},\\
	&\boldsymbol{\mu}^*\in\text{NE}\left(\mathbf{g}, \mathbf{x}^*\right) \iff L(\mathbf{x}^*, \boldsymbol{\mu}^*) \leq L(\mathbf{x}^*, \boldsymbol{\mu}), \,\,\forall \boldsymbol{\mu}\in\Delta_{\mathcal{D}},\\
	\end{split}
	\end{equation*}
	which leads to the desired result. 
\end{proof}

\setcounter{thm}{1}
\begin{thm}
	\label{thm:equilibria_set_properties}
	Let Assumptions \ref{assump:full_potential_game}, \ref{assump:convex_constraints}, and \ref{assump:slater} hold. Consider the primal-dual system (\ref{eq:primal_dynamics})-(\ref{eq:dual_dynamics}), and the equilibria set $\mathcal{E}$ in (\ref{eq:equilibria_set}). If the mass of the dual population satisfies that
	\begin{equation}
	\label{eq:condition_dual_population_mass}
	m_\mathcal{D} \geq \frac{p^* - p(\mathbf{\tilde{x}})}{\min_{k\in\mathcal{C}}\left|g_k(\mathbf{\tilde{x}})\right|},
	\end{equation}
	where $p^*\triangleq\max_{\mathbf{x}\in\Delta_\mathcal{P}\cap\mathcal{X}}p(\mathbf{x})$, and $\mathbf{\tilde{x}}\in\mathbb{R}_{>0}\cap\Delta_\mathcal{P}$ is any vector satisfying Assumption \ref{assump:slater}, then the set $\mathcal{E}$ is nonempty and every point $(\mathbf{x}^*, \boldsymbol{\mu}^*)\in\mathcal{E}$ satisfies that $\mathbf{x}^*\in\text{\normalfont arg}\max_{\mathbf{x}\in\Delta_\mathcal{P}\cap\mathcal{X}}p(\mathbf{x})$, i.e., $\mathbf{x}^*$ solves (\ref{eq:primal_problem}).
\end{thm}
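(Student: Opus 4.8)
The plan is to reduce the entire statement to convex duality for (\ref{eq:primal_problem}) via the saddle-point characterization already in hand. The key preliminary observation is that, since $g_0(\cdot)\equiv0$, the function $L$ in (\ref{eq:lagrangian_extended}) is nothing but the ordinary Lagrangian $p(\mathbf{x})-\sum_{k\in\mathcal{C}}\mu_kg_k(\mathbf{x})$ of (\ref{eq:primal_problem}), with the multiplier vector $(\mu_k)_{k\in\mathcal{C}}$ confined to the truncated simplex $\{\boldsymbol{\lambda}\in\mathbb{R}_{\geq0}^q:\sum_{k}\lambda_k\leq m_\mathcal{D}\}$ and the null strategy $\mu_0=m_\mathcal{D}-\sum_k\mu_k$ absorbing the remaining mass. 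Hence, by Lemma \ref{lem:lagrangian_saddle_points}, the theorem is exactly the assertion that, for $m_\mathcal{D}$ as in (\ref{eq:condition_dual_population_mass}), the saddle points of the Lagrangian over this \emph{bounded} multiplier set coincide with the primal--dual optimal pairs of (\ref{eq:primal_problem}).

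For nonemptiness, I would first invoke Slater's condition (Assumption \ref{assump:slater}) to obtain strong duality for (\ref{eq:primal_problem}) (using \cite{bertsekas2009convex}); existence of a maximizer follows from continuity of $p$ on the compact set $\Delta_\mathcal{P}\cap\mathcal{X}$, and the standard saddle-point theorem yields a primal--dual optimal pair $(\mathbf{x}^*,\boldsymbol{\lambda}^*)$ that is a saddle point of the unrestricted Lagrangian over $\Delta_\mathcal{P}\times\mathbb{R}_{\geq0}^q$. The quantitative heart of this step is to bound $\boldsymbol{\lambda}^*$: evaluating the dual-optimality identity $p^*=\max_{\mathbf{x}\in\Delta_\mathcal{P}}\bigl(p(\mathbf{x})-\sum_k\lambda_k^*g_k(\mathbf{x})\bigr)\geq p(\mathbf{\tilde{x}})-\sum_k\lambda_k^*g_k(\mathbf{\tilde{x}})$ at the Slater point and using $g_k(\mathbf{\tilde{x}})<0$ gives $\bigl(\min_k|g_k(\mathbf{\tilde{x}})|\bigr)\sum_k\lambda_k^*\leq p^*-p(\mathbf{\tilde{x}})$, i.e. $\sum_k\lambda_k^*\leq(p^*-p(\mathbf{\tilde{x}}))/\min_k|g_k(\mathbf{\tilde{x}})|\leq m_\mathcal{D}$ by (\ref{eq:condition_dual_population_mass}). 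Thus $\boldsymbol{\lambda}^*$ fits inside the truncated simplex; setting $\mu_k^*=\lambda_k^*$ for $k\in\mathcal{C}$ and $\mu_0^*=m_\mathcal{D}-\sum_k\lambda_k^*\geq0$ produces $\boldsymbol{\mu}^*\in\Delta_\mathcal{D}$, and I would verify directly that $(\mathbf{x}^*,\boldsymbol{\mu}^*)$ obeys the two saddle inequalities of Lemma \ref{lem:lagrangian_saddle_points}, so that $\mathcal{E}\neq\emptyset$.

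For the converse direction I would compute the inner minimization explicitly: for fixed $\mathbf{x}$ the best dual response places all mass on the largest payoff among $\{g_k(\mathbf{x})\}_{k\in\mathcal{C}}$ and the null value $g_0=0$, so $\min_{\boldsymbol{\mu}\in\Delta_\mathcal{D}}L(\mathbf{x},\boldsymbol{\mu})=p(\mathbf{x})-m_\mathcal{D}\max\{0,\max_{k\in\mathcal{C}}g_k(\mathbf{x})\}$. Since a saddle point exists, the minimax equality holds and the common saddle value equals $\max_{\mathbf{x}\in\Delta_\mathcal{P}}\bigl(p(\mathbf{x})-m_\mathcal{D}\max\{0,\max_kg_k(\mathbf{x})\}\bigr)$, which is an exact-penalty reformulation of (\ref{eq:primal_problem}). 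Using the bounded optimal multiplier from the previous step, one shows this penalized value equals $p^*$ and that any $\mathbf{x}^*$ coming from a saddle point attains it; feasibility of $\mathbf{x}^*$ then immediately forces $p(\mathbf{x}^*)=p^*$ with $\mathbf{x}^*\in\mathcal{X}$, i.e. $\mathbf{x}^*\in\text{arg}\max_{\mathbf{x}\in\Delta_\mathcal{P}\cap\mathcal{X}}p(\mathbf{x})$.

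The main obstacle I anticipate is precisely this feasibility claim: showing the penalty weight $m_\mathcal{D}$ is large enough to exclude infeasible maximizers. The route I would take exploits \emph{both} saddle inequalities. If $\mathbf{x}^*\notin\mathcal{X}$, then $\max_kg_k(\mathbf{x}^*)>0$ and the dual best-response puts all mass on the violated constraints, giving $\mu_0^*=0$ and $\sum_k\mu_k^*=m_\mathcal{D}$; the primal best-response makes $\mathbf{x}^*$ a maximizer of the Lagrangian with multiplier $\boldsymbol{\lambda}^*=(\mu_k^*)_{k\in\mathcal{C}}$, so feeding $\mathbf{\tilde{x}}$ into the resulting dual-optimality inequality reproduces $m_\mathcal{D}=\sum_k\lambda_k^*\leq(p^*-p(\mathbf{\tilde{x}}))/\min_k|g_k(\mathbf{\tilde{x}})|$. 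Against (\ref{eq:condition_dual_population_mass}) this excludes infeasibility whenever the mass bound is strict. The delicate case is equality in (\ref{eq:condition_dual_population_mass}), where an infeasible saddle point cannot be ruled out by this inequality alone (a linear $p$ on $\Delta_\mathcal{P}$ with one active constraint already shows the boundary is tight); closing it would require the strictness $g_k(\mathbf{\tilde{x}})<0$ together with compactness of $\Delta_\mathcal{P}$, or assuming the inequality strictly, and this is the point I would scrutinize most carefully in the authors' argument.
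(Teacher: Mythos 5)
Your existence argument is exactly the paper's: compactness of $\Delta_\mathcal{P}\cap\mathcal{X}$ plus Weierstrass to get an attained, finite $p^*$; Slater's condition to get strong duality and a dual optimal $\boldsymbol{\lambda}^*$; the evaluation of $d(\boldsymbol{\lambda}^*)$ at the Slater point to obtain $\sum_{k}\lambda_k^*\leq(p^*-p(\mathbf{\tilde{x}}))/\min_k|g_k(\mathbf{\tilde{x}})|\leq m_\mathcal{D}$; and the embedding $\mu_k^*=\lambda_k^*$, $\mu_0^*=m_\mathcal{D}-\sum_k\lambda_k^*$ into $\Delta_\mathcal{D}$, combined with Lemma \ref{lem:lagrangian_saddle_points}. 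Where you diverge is the converse ("every point of $\mathcal{E}$ is primal optimal"): the paper disposes of it in one line by invoking Lemma \ref{lem:lagrangian_saddle_points} together with \cite[Proposition 3.4.1]{bertsekas2009convex} to assert that $\mathcal{E}$ coincides with the primal--dual optimal set, whereas you work through the inner minimization and an exact-penalty reformulation. Your route is more explicit and, importantly, it surfaces a real issue that the paper's citation papers over: Proposition 3.4.1 characterizes saddle points of the Lagrangian over $\Delta_\mathcal{P}\times\mathbb{R}_{\geq0}^q$, while $\mathcal{E}$ corresponds (via Lemma \ref{lem:lagrangian_saddle_points}) to saddle points over the \emph{compact} set $\Delta_\mathcal{P}\times\Delta_\mathcal{D}$, which can be strictly larger when $m_\mathcal{D}$ equals the multiplier bound.

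Your suspicion about the equality case in (\ref{eq:condition_dual_population_mass}) is correct, and your suggested counterexample works verbatim. Take $n=2$, $m_\mathcal{P}=1$, $p(\mathbf{x})=x_1$, and the single constraint $g_1(\mathbf{x})=x_1-\tfrac{1}{2}$. Then $p^*=\tfrac{1}{2}$, every Slater point $\mathbf{\tilde{x}}=(a,1-a)$ with $0<a<\tfrac{1}{2}$ gives $(p^*-p(\mathbf{\tilde{x}}))/|g_1(\mathbf{\tilde{x}})|=1$, so (\ref{eq:condition_dual_population_mass}) permits $m_\mathcal{D}=1$. With $\mathbf{x}^*=(1,0)$ and $\boldsymbol{\mu}^*=(\mu_0^*,\mu_1^*)=(0,1)$ one has $\mathbf{g}(\mathbf{x}^*)=(0,\tfrac{1}{2})$, so $\boldsymbol{\mu}^*\in\text{NE}(\mathbf{g},\mathbf{x}^*)$, and $\mathbf{f}^\mu(\mathbf{x},\boldsymbol{\mu}^*)=(1,0)-1\cdot(1,0)=\mathbf{0}$, so every primal state, including the infeasible $\mathbf{x}^*$, lies in $\text{NE}(\mathbf{f}^\mu,\boldsymbol{\mu}^*)$. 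Hence $(\mathbf{x}^*,\boldsymbol{\mu}^*)\in\mathcal{E}$ but $g_1(\mathbf{x}^*)=\tfrac{1}{2}>0$, contradicting the theorem's conclusion. So the feasibility half of the statement genuinely requires strict inequality in (\ref{eq:condition_dual_population_mass}) (or an additional argument); this is not a defect of your proposal but a gap you have correctly located in the result as stated and in the paper's own proof. Apart from flagging this, your argument is sound; to submit it as a complete proof you should either assume the strict inequality or carry out the exact-penalty step you sketch under that assumption.
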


\begin{proof}
	To prove this result, let us consider a convex optimization perspective. Consider the problem in (\ref{eq:primal_problem}) and note that its feasible set is $\Delta_\mathcal{P}\cap\mathcal{X}$. From Assumption \ref{assump:convex_constraints}, it follows that $\mathcal{X}$ is a closed set (it is the intersection of the level sets of continuous functions, which are all closed sets). Hence, since $\Delta_\mathcal{P}$ is compact, it follows that $\Delta_\mathcal{P}\cap\mathcal{X}$ is compact. Therefore, since $p(\cdot)$ is continuous over $\Delta_\mathcal{P}\cap\mathcal{X}$, from the Weierstrass Theorem we conclude that there exists an $\mathbf{x}^*\in\Delta_\mathcal{P}\cap\mathcal{X}$ such that $\mathbf{x}^*\in\text{arg}\max_{\mathbf{x}\in\Delta_\mathcal{P}\cap\mathcal{X}}p(\mathbf{x})$ and $p^*$ is finite, i.e., the set of optimal solutions of (\ref{eq:primal_problem}) is nonempty and the optimal value is finite. Using this observation, in conjunction with Assumption \ref{assump:slater} (Slater's condition), we conclude that there exists a vector $\boldsymbol{\lambda}^*\in\mathbb{R}_{\geq0}^q$ such that $p^*=d^*\triangleq d(\boldsymbol{\lambda}^*)$, where
	\begin{equation*}
	d(\boldsymbol{\lambda}) = \max_{\mathbf{x}\in\Delta_\mathcal{P}}\left(p(\mathbf{x}) - \sum_{k\in\mathcal{C}}\lambda_kg_k(\mathbf{x})\right)
	\end{equation*}
	is the dual function of the dual problem of (\ref{eq:primal_problem}), i.e., $d^* = \min_{\boldsymbol{\lambda}\in\mathbb{R}_{\geq0}^q}d(\boldsymbol{\lambda})$. Namely, strong duality holds for the problem in (\ref{eq:primal_problem}) and there exists a primal-dual optimal solution $(\mathbf{x}^*, \boldsymbol{\lambda}^*)$. Furthermore, note that if $\mathbf{\tilde{x}}$ is any vector satisfying Assumption \ref{assump:slater}, then it follows that
	\begin{equation*}
	\begin{split}
	d^* = p^* &= \max_{\mathbf{x}\in\Delta_\mathcal{P}}\left(p(\mathbf{x}) - \sum_{k\in\mathcal{C}}\lambda_k^*g_k(\mathbf{x})\right)\\
	&\geq p(\mathbf{\tilde{x}}) - \sum_{k\in\mathcal{C}}\lambda_k^*g_k(\mathbf{\tilde{x}})\\
	&\geq p(\mathbf{\tilde{x}}) + \min_{z\in\mathcal{C}}\left|g_z(\mathbf{\tilde{x}})\right|\sum_{k\in\mathcal{C}}\lambda_k^*\quad\text{(since $\mathbf{\tilde{x}}\in\mathcal{X}$)}.
	\end{split}
	\end{equation*}
	Therefore, since $\min_{z\in\mathcal{C}}\left|g_z(\mathbf{\tilde{x}})\right|>0$, it follows that
	\begin{equation*}
	\sum_{k\in\mathcal{C}}\lambda_k^* \leq \frac{p^* - p(\mathbf{\tilde{x}})}{\min_{z\in\mathcal{C}}\left|g_z(\mathbf{\tilde{x}})\right|}\in[0, \infty).
	\end{equation*}
	In consequence, the set of dual optimal solutions of the dual of (\ref{eq:primal_problem}) is bounded. Now, from Lemma \ref{lem:invariance_dual} it holds that $\mu_0(t) = m_\mathcal{D} - \sum_{k\in\mathcal{C}}\mu_k(t)$, for all $t\geq0$. Hence, if we let $\mu_k^*=\lambda_k^*$, for all $k\in\mathcal{C}$, then the vector $\boldsymbol{\mu}^*\in\Delta_{\mathcal{D}}\subset\mathbb{R}_{\geq0}^{q+1}$ is fully determined by the vector $\boldsymbol{\lambda}^*\in\mathbb{R}_{\geq0}^q$ and the mass $m_\mathcal{D}$. Therefore, if $m_\mathcal{D}$ satisfies (\ref{eq:condition_dual_population_mass}), then there exists a point $(\mathbf{x}^*, \boldsymbol{\mu}^*)\in\Delta_\mathcal{P}\times\Delta_\mathcal{D}$ that has direct correspondence with the primal-dual optimal solution $(\mathbf{x}^*, \boldsymbol{\lambda}^*)$. More precisely, the set of primal-dual optimal solutions of (\ref{eq:primal_problem}) is attainable under the proposed primal-dual evolutionary dynamics. Now, note that since $g_0(\cdot)=0$, it follows that (\ref{eq:lagrangian_extended}) is the dual Lagrangian function of (\ref{eq:primal_problem}), where $\mu_k\in\mathbb{R}_{\geq0}$ is the Lagrange multiplier of the $k$-th inequality constraint. Using Lemma \ref{lem:lagrangian_saddle_points} and \cite[Proposition 3.4.1]{bertsekas2009convex}, we conclude that $(\mathbf{x}^*, \boldsymbol{\mu}^*)\in\mathcal{E}$ if and only if $(\mathbf{x}^*, \boldsymbol{\mu}^*)$ corresponds to a primal-dual optimal solution $(\mathbf{x}^*, \boldsymbol{\lambda}^*)$. Therefore, from the previous discussion it holds that $\mathcal{E}$ is nonempty and that every point $(\mathbf{x}^*, \boldsymbol{\mu}^*)\in\mathcal{E}$ satisfies that $\mathbf{x}^*\in\text{\normalfont arg}\max_{\mathbf{x}\in\Delta_\mathcal{P}\cap\mathcal{X}}p(\mathbf{x})$. 
\end{proof}

\setcounter{thm}{5}
\begin{rem}
	\label{rem:feasible_with_respect_to_X}
	Note that Theorem \ref{thm:equilibria_set_properties} provides sufficient conditions to guarantee that the equilibria set $\mathcal{E}$ is nonempty and feasible with respect to $\mathcal{X}$, i.e., any $(\mathbf{x}^*, \boldsymbol{\mu}^*)\in\mathcal{E}$ satisfies that $\mathbf{x}^*\in\mathcal{X}$. In particular, notice that if the right hand side of (\ref{eq:condition_dual_population_mass}) is zero, then the primal optimal $\mathbf{x}^*$ lies in the interior of $\mathcal{X}$, and, in consequence, the dual game is not required for the satisfaction of the constraints.
\end{rem}

\subsection{Lyapunov stability analysis}
In this section, we provide our main results on the asymptotic stability of the proposed primal-dual system. For such, we develop the corresponding Lyapunov stability analysis of the nonlinear primal-dual evolutionary dynamics in (\ref{eq:primal_dynamics})-(\ref{eq:dual_dynamics}).

\setcounter{thm}{5}
\begin{lem}
	\label{lem:lyapunov_function}
	Consider the primal-dual system (\ref{eq:primal_dynamics})-(\ref{eq:dual_dynamics}), the equilibria set $\mathcal{E}$ in (\ref{eq:equilibria_set}), and the map $V:\Delta_\mathcal{P}\times\Delta_\mathcal{D}\rightarrow\mathbb{R}_{\geq0}$ given by
	\begin{equation}
	\label{eq:lyapunov_function}
	V(\mathbf{x}, \boldsymbol{\mu}) = \sum_{j\in\mathcal{S}}\sum_{i\in\mathcal{S}}x_iP_i^j(\mathbf{x}, \boldsymbol{\mu}) + \sum_{l\in\mathcal{C}_e}\sum_{k\in\mathcal{C}_e}\mu_k\Phi_k^l(\mathbf{x}),
	\end{equation}
	where
	\begin{equation*}
	\begin{split}
	P_i^j(\mathbf{x}, \boldsymbol{\mu}) &= \int_0^{f_j^\mu(\mathbf{x},\boldsymbol{\mu}) - f_i^\mu(\mathbf{x},\boldsymbol{\mu})}\rho_j(\tau)d\tau, \quad \forall i,j\in\mathcal{S}\\
	\Phi_k^l(\mathbf{x}) &= \int_0^{g_l(\mathbf{x}) - g_k(\mathbf{x})}\phi_l(\tau)d\tau, \quad \forall k,l\in\mathcal{C}_e.
	\end{split}
	\end{equation*}
	Then, $V(\mathbf{x}, \boldsymbol{\mu})\geq0$ for all $(\mathbf{x}, \boldsymbol{\mu})\in\Delta_\mathcal{P}\times\Delta_\mathcal{D}$, and $V(\mathbf{x}^*, \boldsymbol{\mu}^*)=0$ if and only if $(\mathbf{x}^*, \boldsymbol{\mu}^*)\in\mathcal{E}$.
\end{lem}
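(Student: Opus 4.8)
The plan is to exploit the sign structure that the protocol conditions (\ref{eq:conditions_protocols}) impose on the integrals $P_i^j$ and $\Phi_k^l$, and then to translate the vanishing of $V$ into the Nash equilibrium conditions defining $\mathcal{E}$.

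First I would establish nonnegativity termwise. Fixing $i,j\in\mathcal{S}$ and inspecting $P_i^j=\int_0^{f_j^\mu-f_i^\mu}\rho_j(\tau)\,d\tau$: if $f_j^\mu-f_i^\mu\leq0$, then by (\ref{eq:conditions_protocols_primal}) the integrand is identically zero on the interval of integration and $P_i^j=0$; if $f_j^\mu-f_i^\mu>0$, then $\rho_j(\tau)>0$ throughout $(0,f_j^\mu-f_i^\mu]$ and so $P_i^j>0$. Hence $P_i^j\geq0$, and by the same argument $\Phi_k^l\geq0$. Since $x_i\geq0$ and $\mu_k\geq0$ on $\Delta_\mathcal{P}\times\Delta_\mathcal{D}$, every summand in (\ref{eq:lyapunov_function}) is nonnegative, giving $V\geq0$. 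The sharper fact I would record here is the equivalence $P_i^j=0\iff f_j^\mu-f_i^\mu\leq0$, and likewise $\Phi_k^l=0\iff g_l(\mathbf{x})-g_k(\mathbf{x})\leq0$.

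Because $V$ is a sum of nonnegative terms, $V(\mathbf{x}^*,\boldsymbol{\mu}^*)=0$ if and only if $x_i^*P_i^j=0$ for all $i,j\in\mathcal{S}$ and $\mu_k^*\Phi_k^l=0$ for all $k,l\in\mathcal{C}_e$. I would then show that these termwise conditions are equivalent to the two memberships defining $\mathcal{E}$. For the primal part: if $\mathbf{x}^*\in\text{NE}(\mathbf{f}^\mu,\boldsymbol{\mu}^*)$, the characterization used in the proof of Lemma \ref{lem:primal_dual_nash_stationarity} gives $f_i^\mu=\max_{j}f_j^\mu$ whenever $x_i^*>0$, so $f_j^\mu-f_i^\mu\leq0$ and $P_i^j=0$ for all $j$; conversely, if $x_i^*P_i^j=0$ for all $i,j$, then any $i$ with $x_i^*>0$ has $P_i^j=0$ for all $j$, and the sharp equivalence forces $f_j^\mu\leq f_i^\mu$ for all $j$, i.e. $f_i^\mu=\max_j f_j^\mu$, which is precisely $\mathbf{x}^*\in\text{NE}(\mathbf{f}^\mu,\boldsymbol{\mu}^*)$. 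An identical argument with $\phi_l$, $g_k$, and $\mu_k^*$ yields $\boldsymbol{\mu}^*\in\text{NE}(\mathbf{g},\mathbf{x}^*)$. Combining both equivalences with the definition (\ref{eq:equilibria_set}) of $\mathcal{E}$ closes the proof.

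The only delicate point I anticipate is the strict-positivity direction of the equivalence $P_i^j=0\iff f_j^\mu-f_i^\mu\leq0$: one must argue that a strictly positive upper limit produces a strictly positive integral. This uses that $\rho_j(\tau)>0$ for every $\tau>0$ together with the continuity of $\rho_j$ (it is locally Lipschitz), so the integrand is positive on a set of positive measure. The remainder is routine bookkeeping over the nonnegative summands.
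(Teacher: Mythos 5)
Your proposal is correct and follows essentially the same route as the paper's proof: termwise nonnegativity of $x_iP_i^j$ and $\mu_k\Phi_k^l$ from the sign conditions (\ref{eq:conditions_protocols}), followed by the equivalence between the vanishing of all summands and the support characterization of $\text{NE}(\mathbf{f}^\mu,\boldsymbol{\mu})$ and $\text{NE}(\mathbf{g},\mathbf{x})$. Your explicit note on why a strictly positive upper limit yields a strictly positive integral is a detail the paper leaves implicit, but the argument is the same.
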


\begin{proof}
	First, note that $\mathbb{R}_{\geq0}$ is indeed the codomain of (\ref{eq:lyapunov_function}). This fact follows from $\mathbf{x}\in\Delta_{\mathcal{P}}$, $\boldsymbol{\mu}\in\Delta_{\mathcal{D}}$, $P_i^j(\cdot, \cdot)\geq0$ for all $i,j\in\mathcal{S}$, and $\Phi_k^l(\cdot)\geq0$ for all $k,l\in\mathcal{C}_e$. To prove the second claim, we consider the sufficient and necessary cases separately.
	
	(Sufficiency) If $(\mathbf{x}^*, \boldsymbol{\mu}^*)\in\mathcal{E}$, then $\mathbf{x}^*\in\text{NE}\left(\mathbf{f}^\mu, \boldsymbol{\mu}^*\right)$ and $\boldsymbol{\mu}^*\in\text{NE}\left(\mathbf{g}, \mathbf{x}^*\right)$. In consequence, $x_i^*>0\implies f_i^\mu(\mathbf{x}^*, \boldsymbol{\mu}^*)\geq f_j^\mu(\mathbf{x}^*, \boldsymbol{\mu}^*) \implies P_i^j(\mathbf{x}^*, \boldsymbol{\mu}^*)=0$, for all $i,j\in\mathcal{S}$. Similarly, $\mu_k^*>0\implies g_k(\mathbf{x}^*) \geq g_l(\mathbf{x}^*) \implies \Phi_k^l(\mathbf{x}^*)=0$, for all $k,l\in\mathcal{C}_e$. Therefore, $V(\mathbf{x}^*, \boldsymbol{\mu}^*)=0$, for all $(\mathbf{x}^*, \boldsymbol{\mu}^*)\in\mathcal{E}$.
	
	(Necessity) Consider a point $(\mathbf{x}^*, \boldsymbol{\mu})\in\Delta_{\mathcal{P}}\times\Delta_{\mathcal{D}}$. Let $\mathbf{x}^*\in\text{NE}\left(\mathbf{f}^\mu, \boldsymbol{\mu}\right)$ but suppose $\boldsymbol{\mu}\notin\text{NE}\left(\mathbf{g}, \mathbf{x}^*\right)$. Then, there exists some $w,z\in\mathcal{C}_e$ such that $\mu_w>0$ and $g_w(\mathbf{x}^*)<g_z(\mathbf{x}^*)$. In consequence, $\mu_w\Phi_w^z(\mathbf{x}^*)>0$ and $V(\mathbf{x}^*, \boldsymbol{\mu})>0$. Now, consider a point $(\mathbf{x}, \boldsymbol{\mu}^*)\in\Delta_{\mathcal{P}}\times\Delta_{\mathcal{D}}$. Let $\boldsymbol{\mu}^*\in\text{NE}\left(\mathbf{g}, \mathbf{x}\right)$ but suppose $\mathbf{x}\notin\text{NE}\left(\mathbf{f}^\mu, \boldsymbol{\mu}^*\right)$. Then, there exists some $v,y\in\mathcal{S}$ such that $x_v>0$ and $f_v^\mu(\mathbf{x}, \boldsymbol{\mu}^*) < f_y^\mu(\mathbf{x}, \boldsymbol{\mu}^*)$. Therefore, $x_vP_v^y(\mathbf{x}, \boldsymbol{\mu}^*)>0$ and $V(\mathbf{x}, \boldsymbol{\mu}^*)>0$. Hence, any $(\mathbf{x}, \boldsymbol{\mu})\in\left(\Delta_{\mathcal{P}}\times\Delta_{\mathcal{D}}\right)\setminus\mathcal{E}$ implies that $V(\mathbf{x}, \boldsymbol{\mu})>0$. 
\end{proof}

Using Lemma \ref{lem:lyapunov_function}, we now state the following theorem that provides sufficient conditions to guarantee the asymptotic stability of the equilibria set $\mathcal{E}$ under the considered primal-dual dynamics.

\setcounter{thm}{2}
\begin{thm}
	\label{thm:lyapunov_stability_potential_games}
	Let Assumptions \ref{assump:full_potential_game} and \ref{assump:convex_constraints} hold. Consider the primal-dual system (\ref{eq:primal_dynamics})-(\ref{eq:dual_dynamics}), and the equilibria set $\mathcal{E}$ in (\ref{eq:equilibria_set}). Moreover, assume that $\mathcal{E}$ is nonempty. Then, $\mathcal{E}$ is asymptotically stable under the considered dynamics.
\end{thm}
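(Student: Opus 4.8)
The plan is to use the function $V$ from Lemma \ref{lem:lyapunov_function} as a Lyapunov function for the set $\mathcal{E}$. By Lemma \ref{lem:lyapunov_function}, $V$ is continuous, nonnegative on $\Delta_{\mathcal{P}}\times\Delta_{\mathcal{D}}$, and vanishes exactly on $\mathcal{E}$; by hypothesis $\mathcal{E}$ is nonempty, and it is a closed (hence compact) subset of the compact set $\Delta_{\mathcal{P}}\times\Delta_{\mathcal{D}}$, which is positively invariant by Proposition \ref{prop:invariance}. Thus $V$ is positive definite with respect to $\mathcal{E}$ on a compact invariant domain, and the whole burden reduces to two things: showing $\dot{V}\leq0$ along trajectories (which already yields Lyapunov stability of $\mathcal{E}$), and then upgrading this to attractivity via an invariance argument.

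The crux is the sign of $\dot{V}$. I would split $V=V_{\mathcal{P}}+V_{\mathcal{D}}$ with $V_{\mathcal{P}}=\sum_{j,i}x_iP_i^j$ and $V_{\mathcal{D}}=\sum_{l,k}\mu_k\Phi_k^l$, and differentiate each along \eqref{eq:primal_dynamics}--\eqref{eq:dual_dynamics}. In each sum the time derivative splits into a piece where it hits the upper integration limit and a piece where it hits $\dot{x}_i$ (resp.\ $\dot{\mu}_k$). The first piece reassembles, after swapping the summation indices, into the exact supply rate: $\sum_{i,j}x_i\rho_i^j(\dot{f}_j^\mu-\dot{f}_i^\mu)=(\dot{\mathbf{f}}^\mu)^\top\dot{\mathbf{x}}$, and analogously $(\dot{\mathbf{g}})^\top\dot{\boldsymbol{\mu}}$ for the dual. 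The second piece can be written as $\sum_{i,j}x_i\rho_i^j(Q_j-Q_i)$ with $Q_i=\sum_jP_i^j$, and each summand is nonpositive because $\rho_i^j>0$ forces $f_j^\mu>f_i^\mu$, which by monotonicity of the integrands forces $Q_j\leq Q_i$. Hence each population's dynamics is $\delta$-passive with its storage function, giving $\dot{V}\leq(\dot{\mathbf{f}}^\mu)^\top\dot{\mathbf{x}}+(\dot{\mathbf{g}})^\top\dot{\boldsymbol{\mu}}$.

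Next I would exploit the saddle structure recorded in the proof of Lemma \ref{lem:lagrangian_saddle_points}, namely $\mathbf{f}^\mu=\nabla_{\mathbf{x}}L$ and $\mathbf{g}=-\nabla_{\boldsymbol{\mu}}L$ for $L$ in \eqref{eq:lagrangian_extended}. Expanding the total derivatives gives $\dot{\mathbf{f}}^\mu=\nabla^2_{\mathbf{x}\mathbf{x}}L\,\dot{\mathbf{x}}-(D\mathbf{g})^\top\dot{\boldsymbol{\mu}}$ and $\dot{\mathbf{g}}=D\mathbf{g}\,\dot{\mathbf{x}}$, where $D\mathbf{g}=[\partial g_k/\partial x_i]$. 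The two bilinear cross terms then read $-\dot{\boldsymbol{\mu}}^\top D\mathbf{g}\,\dot{\mathbf{x}}$ and $+\dot{\boldsymbol{\mu}}^\top D\mathbf{g}\,\dot{\mathbf{x}}$ and cancel exactly, leaving $\dot{V}\leq\dot{\mathbf{x}}^\top\nabla^2_{\mathbf{x}\mathbf{x}}L\,\dot{\mathbf{x}}$. Finally, by Assumptions \ref{assump:full_potential_game} and \ref{assump:convex_constraints} together with $\boldsymbol{\mu}\geq\mathbf{0}$, one has $\nabla^2_{\mathbf{x}\mathbf{x}}L=\nabla^2 p-\sum_{k}\mu_k\nabla^2 g_k\preceq0$, so $\dot{V}\leq0$ everywhere on $\Delta_{\mathcal{P}}\times\Delta_{\mathcal{D}}$. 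This establishes stability of $\mathcal{E}$.

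To conclude attractivity I would invoke LaSalle's invariance principle on the compact invariant set $\Delta_{\mathcal{P}}\times\Delta_{\mathcal{D}}$: every trajectory converges to the largest invariant set $M\subseteq\{\dot{V}=0\}$, and since points of $\mathcal{E}$ are rest points with $V=0$ we have $\mathcal{E}\subseteq M$. The main obstacle is the reverse inclusion. Note that $\dot{V}=0$ does \emph{not} pointwise force $\dot{\mathbf{x}}=\dot{\boldsymbol{\mu}}=\mathbf{0}$: the bound only yields $\nabla^2_{\mathbf{x}\mathbf{x}}L\,\dot{\mathbf{x}}=\mathbf{0}$ (as $\nabla^2_{\mathbf{x}\mathbf{x}}L\preceq0$) together with the vanishing of the two $\delta$-passivity slack terms, which in degenerate (e.g.\ affine-constraint, linear-potential) cases leaves room for persistent motion. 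The decisive observation is that the skew-symmetric coupling makes the symmetric part of the combined payoff Jacobian equal to $\mathrm{diag}(\nabla^2_{\mathbf{x}\mathbf{x}}L,\mathbf{0})\preceq0$, so the interconnection is a (multi-population) \emph{stable} game; I would then use invariance together with this monotone structure to show that no trajectory confined to $\{\dot{V}=0\}$ can keep $V$ at a positive constant, so that $M$ consists solely of rest points. By Theorem \ref{thm:equilibria_set_definition} the rest points are exactly $\mathcal{E}$, giving $M=\mathcal{E}$ and hence asymptotic stability. Pinning down this last step rigorously — ruling out limit cycles on $\{\dot{V}=0\}$ — is the part I expect to require the most care.
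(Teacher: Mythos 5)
Your computation of $\dot{V}$ is correct and is, in substance, the paper's own: the same splitting of the time derivative of $V=\sum_{j,i}x_iP_i^j+\sum_{l,k}\mu_k\Phi_k^l$ into an ``integration-limit'' piece that reassembles into $(\dot{\mathbf{f}}^\mu)^\top\dot{\mathbf{x}}+(\dot{\mathbf{g}})^\top\dot{\boldsymbol{\mu}}$ with the two bilinear cross terms cancelling, a nonpositive slack piece $\boldsymbol{\Gamma}_P^\top\dot{\mathbf{x}}+\boldsymbol{\Gamma}_\Phi^\top\dot{\boldsymbol{\mu}}$, and the curvature term $\dot{\mathbf{x}}^\top\nabla^2_{\mathbf{x}\mathbf{x}}L\,\dot{\mathbf{x}}\leq0$ coming from Assumptions \ref{assump:full_potential_game} and \ref{assump:convex_constraints}. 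Up to and including Lyapunov stability of $\mathcal{E}$, the argument is sound.

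The gap is in the final attractivity step. You treat $\boldsymbol{\Gamma}_P^\top\dot{\mathbf{x}}$ and $\boldsymbol{\Gamma}_\Phi^\top\dot{\boldsymbol{\mu}}$ as mere nonpositive slack and then reach for LaSalle, leaving unresolved the problem of excluding nontrivial invariant motion inside $\{\dot{V}=0\}$. In fact these two terms are \emph{strictly} negative off the respective Nash sets, so $\dot{V}<0$ on $\left(\Delta_\mathcal{P}\times\Delta_\mathcal{D}\right)\setminus\mathcal{E}$ and no invariance principle is needed. Concretely, with $Q_i\triangleq\sum_{j\in\mathcal{S}}P_i^j$: if $\mathbf{x}\notin\text{NE}\left(\mathbf{f}^\mu,\boldsymbol{\mu}\right)$, pick $i$ with $x_i>0$ and $f_i^\mu<\max_{j\in\mathcal{S}}f_j^\mu$, and pick $y\in\arg\max_{j\in\mathcal{S}}f_j^\mu$; then $\rho_i^y>0$ by (\ref{eq:conditions_protocols_primal}), $Q_y=0$ because every upper limit $f_j^\mu-f_y^\mu$ is nonpositive, and $Q_i\geq P_i^y>0$, so the single summand $x_i\rho_i^y\left(Q_y-Q_i\right)$ is strictly negative and $\boldsymbol{\Gamma}_P^\top\dot{\mathbf{x}}<0$. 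The identical argument applies to the dual population. Hence $\dot{V}=0$ forces $\mathbf{x}\in\text{NE}\left(\mathbf{f}^\mu,\boldsymbol{\mu}\right)$ and $\boldsymbol{\mu}\in\text{NE}\left(\mathbf{g},\mathbf{x}\right)$ simultaneously, i.e.\ $(\mathbf{x},\boldsymbol{\mu})\in\mathcal{E}$ by (\ref{eq:equilibria_set}), so $\{\dot{V}=0\}$ coincides with $\mathcal{E}$ exactly. This is the property the paper invokes via \cite[Theorem 7.1]{hofbauer2009stable} for impartial pairwise comparison protocols; your concern about limit cycles in degenerate affine or linear cases does not arise, because the strict negativity above is independent of the curvature term.
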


\begin{proof}
	Notice that, due to the local Lipschitz continuity of $\rho_j(\cdot)$ and $\phi_l(\cdot)$, for all $j\in\mathcal{S}$ and all $l\in\mathcal{C}_e$, and the compactness of $\Delta_{\mathcal{P}}$ and $\Delta_\mathcal{D}$, it follows that the primal-dual dynamics are locally Lipschitz continuous. Moreover, note that the set $\mathcal{E}$ is compact. To see this, observe that $\mathcal{E}\subseteq\Delta_{\mathcal{P}}\times\Delta_{\mathcal{D}}$ is bounded, and that $\mathcal{E}$ is closed because it is the preimage of the closed set $\{0\}$ under the continuous map provided by (\ref{eq:lyapunov_function}). In consequence, the primal-dual dynamics can be investigated using standard Lyapunov stability theory \cite[Corollary 4.7]{haddad2008}. From Lemma \ref{lem:lyapunov_function}, it follows that (\ref{eq:lyapunov_function}) is a valid Lyapunov function candidate. Hence, we proceed to analyze its derivatives. For such, let $f_i\triangleq f_i(\mathbf{x})$, $f_i^\mu\triangleq f_i^\mu(\mathbf{x}, \boldsymbol{\mu})$, $g_k\triangleq g_k(\mathbf{x})$, $P_i^j\triangleq P_i^j(\mathbf{x}, \boldsymbol{\mu})$, $\Phi_k^l\triangleq \Phi_k^l(\mathbf{x})$, $\rho_i^j\triangleq \rho_i^j(\mathbf{x}, \boldsymbol{\mu})$, and $\phi_k^l\triangleq\phi_k^l(\mathbf{x})$, and note that
	\begin{equation*}
	\begin{split}
	&\frac{\partial V(\mathbf{x}, \boldsymbol{\mu})}{\partial x_y} = \sum_{j\in\mathcal{S}}P_y^j + \sum_{j\in\mathcal{S}}\sum_{i\in\mathcal{S}}x_i\frac{\partial P_i^j}{\partial x_y} + \sum_{l\in\mathcal{C}_e}\sum_{k\in\mathcal{C}_e}\mu_k\frac{\partial \Phi_k^l}{\partial x_y}\\
	&\frac{\partial V(\mathbf{x}, \boldsymbol{\mu})}{\partial \mu_z} = \sum_{j\in\mathcal{S}}\sum_{i\in\mathcal{S}}x_i\frac{\partial P_i^j}{\partial \mu_z} + \sum_{l\in\mathcal{C}_e}\Phi_z^l,
	\end{split}
	\end{equation*}
	for all $y\in\mathcal{S}$ and all $z\in\mathcal{C}_e$. Moreover, observe that
	\begin{equation*}
	\begin{split}
	\sum_{j\in\mathcal{S}}\sum_{i\in\mathcal{S}}x_i\frac{\partial P_i^j}{\partial x_y} &= \sum_{j\in\mathcal{S}}\sum_{i\in\mathcal{S}}x_i\rho_i^j\left(\frac{\partial f_j^\mu}{\partial x_y} - \frac{\partial f_i^\mu}{\partial x_y}\right)\\
	&= \sum_{j\in\mathcal{S}}\sum_{i\in\mathcal{S}}x_i\rho_i^j\frac{\partial f_j^\mu}{\partial x_y} - \sum_{j\in\mathcal{S}}\sum_{i\in\mathcal{S}}x_i\rho_i^j\frac{\partial f_i^\mu}{\partial x_y}\\
	&= \sum_{j\in\mathcal{S}}\sum_{i\in\mathcal{S}}x_i\rho_i^j\frac{\partial f_j^\mu}{\partial x_y} - \sum_{j\in\mathcal{S}}\sum_{i\in\mathcal{S}}x_j\rho_j^i\frac{\partial f_j^\mu}{\partial x_y}\\
	&= \sum_{j\in\mathcal{S}}\sum_{i\in\mathcal{S}}\left(x_i\rho_i^j - x_j\rho_j^i\right)\frac{\partial f_j^\mu}{\partial x_y}\\
	&= \sum_{j\in\mathcal{S}}\dot{x}_j\frac{\partial f_j^\mu}{\partial x_y} \quad (\text{using (\ref{eq:primal_dot})}).
	\end{split}
	\end{equation*}
	Similarly,
	\begin{equation*}
	\begin{split}
	\sum_{l\in\mathcal{C}_e}\sum_{k\in\mathcal{C}_e}\mu_k\frac{\partial \Phi_k^l}{\partial x_y} &= \sum_{l\in\mathcal{C}_e}\sum_{k\in\mathcal{C}_e}\mu_k\phi_k^l\left(\frac{\partial g_l}{\partial x_y} - \frac{\partial g_k}{\partial x_y}\right)\\
	&= \sum_{l\in\mathcal{C}_e}\sum_{k\in\mathcal{C}_e}\left(\mu_k\phi_k^l - \mu_l\phi_l^k\right)\frac{\partial g_l}{\partial x_y}\\
	&= \sum_{l\in\mathcal{C}_e}\dot{\mu}_l\frac{\partial g_l}{\partial x_y}  \quad (\text{using (\ref{eq:dual_dot})}),
	\end{split}
	\end{equation*}
	and
	\begin{equation*}
	\begin{split}
	\sum_{j\in\mathcal{S}}\sum_{i\in\mathcal{S}}x_i\frac{\partial P_i^j}{\partial \mu_z} &= \sum_{j\in\mathcal{S}}\sum_{i\in\mathcal{S}}x_i\rho_i^j\left(\frac{\partial f_j^\mu}{\partial \mu_z} - \frac{\partial f_i^\mu}{\partial \mu_z}\right)\\
	&= \sum_{j\in\mathcal{S}}\sum_{i\in\mathcal{S}}\left(x_i\rho_i^j - x_j\rho_j^i\right)\frac{\partial f_j^\mu}{\partial \mu_z}\\
	&= \sum_{j\in\mathcal{S}}\dot{x}_j\frac{\partial f_j^\mu}{\partial \mu_z} \quad (\text{using (\ref{eq:primal_dot})})\\
	&= -\sum_{j\in\mathcal{S}}\dot{x}_j\frac{\partial g_z}{\partial x_j}  \quad (\text{using (\ref{eq:primal_dual_game})}).
	\end{split}
	\end{equation*}
	By defining the vectors $\boldsymbol{\Gamma}_P \triangleq \left[\sum_{j\in\mathcal{S}}P_y^j\right]\in\mathbb{R}_{\geq0}^n$ and $\boldsymbol{\Gamma}_\Phi \triangleq \left[\sum_{l\in\mathcal{C}_e}\Phi_z^l\right]\in\mathbb{R}_{\geq0}^{q+1}$, it follows that
	\begin{equation*}
	\begin{split}
	&\nabla_\mathbf{x}V(\mathbf{x}, \boldsymbol{\mu}) = \boldsymbol{\Gamma}_P + \left(\text{D}\mathbf{f}^\mu\right)^\top\mathbf{\dot{x}} + \left(\text{D}\mathbf{g}\right)^\top\mathbf{\dot{\boldsymbol{\mu}}}\\
	&\nabla_{\boldsymbol{\mu}}V(\mathbf{x}, \boldsymbol{\mu}) = \boldsymbol{\Gamma}_\Phi - \text{D}\mathbf{g}\mathbf{\dot{x}},
	\end{split}
	\end{equation*}
	where $\text{D}\mathbf{f}^\mu\in\mathbb{R}^{n\times n}$ is the Jacobian matrix of $\mathbf{f}^\mu(\cdot, \boldsymbol{\mu})$ and is evaluated at $\left(\mathbf{x}(t), \boldsymbol{\mu}(t)\right)$; and $\text{D}\mathbf{g}\in\mathbb{R}^{(q+1)\times n}$ is the Jacobian matrix of $\mathbf{g}(\cdot)$ and is evaluated at $\mathbf{x}(t)$. Therefore, setting $\nabla_\mathbf{x}V \triangleq \nabla_\mathbf{x}V(\mathbf{x}, \boldsymbol{\mu})$ and $\nabla_{\boldsymbol{\mu}}V \triangleq \nabla_{\boldsymbol{\mu}}V(\mathbf{x}, \boldsymbol{\mu})$, it follows that
	\begin{equation*}
	\begin{split}
	&\left[\nabla_\mathbf{x}V^\top, \nabla_{\boldsymbol{\mu}}V^\top\right]\left[\begin{array}{c}\mathbf{\dot{x}}\\
	\mathbf{\dot{\boldsymbol{\mu}}}\end{array}\right]\\
	&\quad\,\,= \boldsymbol{\Gamma}_P^\top\mathbf{\dot{x}} + \mathbf{\dot{x}}^\top\text{D}\mathbf{f}^\mu\mathbf{\dot{x}} + \mathbf{\dot{\boldsymbol{\mu}}}^\top\text{D}\mathbf{g}\mathbf{\dot{x}} + \boldsymbol{\Gamma}_\Phi^\top\mathbf{\dot{\boldsymbol{\mu}}} - \mathbf{\dot{x}}^\top\left(\text{D}\mathbf{g}\right)^\top\mathbf{\dot{\boldsymbol{\mu}}}\\
	&\quad\,\,= \boldsymbol{\Gamma}_P^\top\mathbf{\dot{x}} + \mathbf{\dot{x}}^\top\text{D}\mathbf{f}^\mu\mathbf{\dot{x}} + \boldsymbol{\Gamma}_\Phi^\top\mathbf{\dot{\boldsymbol{\mu}}},
	\end{split}
	\end{equation*}
	where $\mathbf{\dot{\boldsymbol{\mu}}}^\top\text{D}\mathbf{g}\mathbf{\dot{x}} = \left(\mathbf{\dot{\boldsymbol{\mu}}}^\top\text{D}\mathbf{g}\mathbf{\dot{x}}\right)^\top = \mathbf{\dot{x}}^\top\left(\text{D}\mathbf{g}\right)^\top\mathbf{\dot{\boldsymbol{\mu}}}$ since it is a scalar. From the concavity of $p(\cdot)$ (c.f., Assumption \ref{assump:full_potential_game}) and the convexity of $g_k(\cdot)$ for all $k\in\mathcal{C}_e$ (c.f., Assumption \ref{assump:convex_constraints} and recall that $g_0(\cdot)=0$), it follows that $\mathbf{\dot{x}}^\top\text{D}\mathbf{f}^\mu\mathbf{\dot{x}}\leq0$ for all $t\geq0$, and $(\mathbf{x}, \boldsymbol{\mu})\in\mathcal{E}$ implies that $\mathbf{\dot{x}}^\top\text{D}\mathbf{f}^\mu\mathbf{\dot{x}}=0$ since $\mathbf{\dot{x}}=\mathbf{0}$ at $\mathcal{E}$ (c.f., Lemma \ref{lem:primal_dual_nash_stationarity}). Hence, we proceed to analyze the other two terms. Note that,
	\begin{equation*}
	\begin{split}
	\boldsymbol{\Gamma}_P^\top\mathbf{\dot{x}} &= \sum_{y\in\mathcal{S}}\dot{x}_y\sum_{j\in\mathcal{S}}P_y^j\\
	&= \sum_{y\in\mathcal{S}}\sum_{i\in\mathcal{S}}\left(x_i\rho_i^y - x_y\rho_y^i\right)\sum_{j\in\mathcal{S}}P_y^j \quad (\text{using (\ref{eq:primal_dot})})\\
	&= \sum_{y\in\mathcal{S}}\sum_{i\in\mathcal{S}}x_i\rho_i^y\sum_{j\in\mathcal{S}}P_y^j - \sum_{y\in\mathcal{S}}\sum_{i\in\mathcal{S}}x_y\rho_y^i\sum_{j\in\mathcal{S}}P_y^j\\
	&= \sum_{y\in\mathcal{S}}\sum_{i\in\mathcal{S}}x_i\rho_i^y\sum_{j\in\mathcal{S}}P_y^j - \sum_{y\in\mathcal{S}}\sum_{i\in\mathcal{S}}x_i\rho_i^y\sum_{j\in\mathcal{S}}P_i^j\\
	&= \sum_{y\in\mathcal{S}}\sum_{i\in\mathcal{S}}x_i\rho_i^y\sum_{j\in\mathcal{S}}\left(P_y^j - P_i^j\right).\\
	\end{split}
	\end{equation*}
	Similarly,
	\begin{equation*}
	\begin{split}
	\boldsymbol{\Gamma}_\Phi^\top\mathbf{\dot{\boldsymbol{\mu}}} &= \sum_{z\in\mathcal{C}_e}\dot{\mu}_z\sum_{l\in\mathcal{C}_e}\Phi_z^l\\
	&= \sum_{z\in\mathcal{C}_e}\sum_{k\in\mathcal{C}_e}\left(\mu_k\phi_k^z - \mu_z\phi_z^k\right)\sum_{l\in\mathcal{C}_e}\Phi_z^l \quad (\text{using (\ref{eq:dual_dot})})\\
	&= \sum_{z\in\mathcal{C}_e}\sum_{k\in\mathcal{C}_e}\mu_k\phi_k^z\sum_{l\in\mathcal{C}_e}\left(\Phi_z^l - \Phi_k^l\right).\\
	\end{split}
	\end{equation*}
	Here, note that $f_y^\mu\leq f_i^\mu\implies \rho_i^y=0$ and $g_z \leq g_k \implies \phi_k^z=0$. Thus, it suffices to analyze the relevant cases where $f_y^\mu>f_i^\mu$ and $g_z>g_k$. In particular, notice that
	\begin{equation*}
	\begin{split}
	&f_j^\mu \geq f_y^\mu > f_i^\mu \implies P_y^j - P_i^j < 0\\
	&f_y^\mu > f_j^\mu > f_i^\mu \implies P_y^j - P_i^j = 0 - P_i^j < 0\\
	&f_y^\mu > f_i^\mu \geq f_j^\mu \implies P_y^j - P_i^j = 0 - 0 = 0,\\
	\text{and}\quad &g_l \geq g_z > g_k \implies \Phi_z^l - \Phi_k^l < 0\\
	&g_z > g_l > g_k \implies \Phi_z^l - \Phi_k^l = 0 - \Phi_k^l < 0\\
	&g_z > g_k \geq g_l \implies \Phi_z^l - \Phi_k^l = 0 - 0 = 0.\\
	\end{split}
	\end{equation*}
	Therefore, $\boldsymbol{\Gamma}_P^\top\mathbf{\dot{x}}\leq0$ and $\boldsymbol{\Gamma}_\Phi^\top\mathbf{\dot{\boldsymbol{\mu}}}\leq0$ for all $t\geq0$. Hence, $\mathcal{E}$ is stable in the sense of Lyapunov. Furthermore, using (\ref{eq:conditions_protocols_primal}), (\ref{eq:primal_memoryless}), and the fact that $\mathbf{x}\in\text{NE}\left(\mathbf{f}^\mu, \boldsymbol{\mu}\right)$ implies that $x_i>0\implies f_i^\mu = \max_{j\in\mathcal{S}}f_j^\mu$, for all $i\in\mathcal{S}$, it can be shown that $\boldsymbol{\Gamma}_P^\top\mathbf{\dot{x}}=0\iff\mathbf{x}\in\text{NE}\left(\mathbf{f}^\mu, \boldsymbol{\mu}\right)$ \cite[Theorem 7.1]{hofbauer2009stable}. Since the dual dynamics have the same form as the primal dynamics, using the same arguments it follows that $\boldsymbol{\Gamma}_\Phi^\top\mathbf{\dot{\boldsymbol{\mu}}}=0\iff\boldsymbol{\mu}\in\text{NE}\left(\mathbf{g}, \mathbf{x}\right)$. In consequence, the equilibria set $\mathcal{E}$ is asymptotically stable. 
\end{proof}

\setcounter{thm}{6}
\begin{rem}
	\label{rem:theorems_2_and_3}
	Notice that if all the conditions of Theorem \ref{thm:equilibria_set_properties} hold, then the result of Theorem \ref{thm:lyapunov_stability_potential_games} follows immediately. Moreover, in such case the set $\mathcal{E}$ is not only asymptotically stable, but is also the set of primal-dual optimal solutions of the convex optimization problem in (\ref{eq:primal_problem}).
\end{rem}

Note that, under some different assumptions, the results of Theorem \ref{thm:lyapunov_stability_potential_games} are valid for other classes of games that might not necessarily be full-potential games. We highlight such result in the following corollary.

\setcounter{thm}{0}
\begin{cor}
	\label{cor:stable_games}
	Let Assumption \ref{assump:convex_constraints} hold. Consider the primal-dual system (\ref{eq:primal_dynamics})-(\ref{eq:dual_dynamics}), and the equilibria set $\mathcal{E}$ in (\ref{eq:equilibria_set}). Moreover, assume that $\mathcal{E}$ is nonempty and that the primal game $\mathbf{f}(\cdot)$ is continuously differentiable and satisfies that $\mathbf{\dot{x}}^\top\text{\normalfont D}\mathbf{f}\mathbf{\dot{x}}\leq0$, for all $t\geq0$, where $\text{\normalfont D}\mathbf{f}\in\mathbb{R}^{n\times n}$ is the Jacobian matrix of $\mathbf{f}(\cdot)$ and is evaluated at $\mathbf{x}(t)$; and $\mathbf{\dot{x}}=[\dot{x}_i]\in\mathbb{R}^n$ is determined by (\ref{eq:primal_dot}). Then, $\mathcal{E}$ is asymptotically stable under the considered dynamics. 
\end{cor}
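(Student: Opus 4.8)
The plan is to follow the proof of Theorem~\ref{thm:lyapunov_stability_potential_games} almost verbatim, observing that Assumption~\ref{assump:full_potential_game} entered that argument in exactly one place: to guarantee that the quadratic form $\mathbf{\dot{x}}^\top\text{D}\mathbf{f}^\mu\mathbf{\dot{x}}$ appearing in the Lyapunov derivative is nonpositive. The idea is to isolate that single step and replace the concavity-of-$p$ argument by the weaker stable-game hypothesis $\mathbf{\dot{x}}^\top\text{D}\mathbf{f}\mathbf{\dot{x}}\leq0$ together with Assumption~\ref{assump:convex_constraints}. Everything else in the proof of Theorem~\ref{thm:lyapunov_stability_potential_games} is structural and does not invoke Assumption~\ref{assump:full_potential_game}.

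First I would check that the regularity prerequisites survive the removal of Assumption~\ref{assump:full_potential_game}. Since $\mathbf{f}(\cdot)$ is continuously differentiable and each $g_k(\cdot)$ is twice continuously differentiable (Assumption~\ref{assump:convex_constraints}), the primal-dual game (\ref{eq:primal_dual_game}) is continuously differentiable; hence $\mathbf{f}^\mu(\cdot,\cdot)$ and $\mathbf{g}(\cdot)$ are continuous, the map $V$ of (\ref{eq:lyapunov_function}) remains a valid Lyapunov candidate by Lemma~\ref{lem:lyapunov_function}, and the dynamics are locally Lipschitz on the compact set $\Delta_\mathcal{P}\times\Delta_\mathcal{D}$. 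The set $\mathcal{E}$ is nonempty by hypothesis and compact by the same preimage argument as before. Thus the Lyapunov framework applies unchanged, and differentiating $V$ yields the identical decomposition
\begin{equation*}
\left[\nabla_\mathbf{x}V^\top,\nabla_{\boldsymbol{\mu}}V^\top\right]\left[\begin{array}{c}\mathbf{\dot{x}}\\ \mathbf{\dot{\boldsymbol{\mu}}}\end{array}\right] = \boldsymbol{\Gamma}_P^\top\mathbf{\dot{x}} + \mathbf{\dot{x}}^\top\text{D}\mathbf{f}^\mu\mathbf{\dot{x}} + \boldsymbol{\Gamma}_\Phi^\top\mathbf{\dot{\boldsymbol{\mu}}},
\end{equation*}
since this algebra uses only (\ref{eq:primal_dot})-(\ref{eq:dual_dot}) and (\ref{eq:primal_dual_game}). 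The outer terms satisfy $\boldsymbol{\Gamma}_P^\top\mathbf{\dot{x}}\leq0$ and $\boldsymbol{\Gamma}_\Phi^\top\mathbf{\dot{\boldsymbol{\mu}}}\leq0$ by the pairwise-comparison sign analysis of Theorem~\ref{thm:lyapunov_stability_potential_games}, which rests only on (\ref{eq:conditions_protocols_primal})-(\ref{eq:conditions_protocols_dual}) and is therefore untouched.

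The crux is the middle term. Differentiating (\ref{eq:primal_dual_game}) gives the identity $\text{D}\mathbf{f}^\mu = \text{D}\mathbf{f} - \sum_{k\in\mathcal{C}_e}\mu_k\nabla^2 g_k$, where $\nabla^2 g_k$ denotes the Hessian of $g_k$. Since each $g_k(\cdot)$ is convex (Assumption~\ref{assump:convex_constraints}, with $g_0\equiv0$) we have $\mathbf{\dot{x}}^\top\nabla^2 g_k\mathbf{\dot{x}}\geq0$, and since $\boldsymbol{\mu}\in\Delta_\mathcal{D}$ forces $\mu_k\geq0$, it follows that $\mathbf{\dot{x}}^\top\text{D}\mathbf{f}^\mu\mathbf{\dot{x}} = \mathbf{\dot{x}}^\top\text{D}\mathbf{f}\mathbf{\dot{x}} - \sum_{k\in\mathcal{C}_e}\mu_k\,\mathbf{\dot{x}}^\top\nabla^2 g_k\mathbf{\dot{x}} \leq \mathbf{\dot{x}}^\top\text{D}\mathbf{f}\mathbf{\dot{x}}\leq0$ along trajectories, the last inequality being precisely the stable-game hypothesis. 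This is the sole substitution needed; the three terms are again nonpositive, giving Lyapunov stability, and the strict-decrease characterizations $\boldsymbol{\Gamma}_P^\top\mathbf{\dot{x}}=0\iff\mathbf{x}\in\text{NE}(\mathbf{f}^\mu,\boldsymbol{\mu})$ and $\boldsymbol{\Gamma}_\Phi^\top\mathbf{\dot{\boldsymbol{\mu}}}=0\iff\boldsymbol{\mu}\in\text{NE}(\mathbf{g},\mathbf{x})$ (with Lemma~\ref{lem:primal_dual_nash_stationarity} ensuring the middle term also vanishes on $\mathcal{E}$) show that at least one term is strictly negative off $\mathcal{E}$, hence $\dot{V}<0$ there, yielding asymptotic stability. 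I do not anticipate a genuine obstacle; the only point requiring care is confirming that the stable-game property transfers from $\mathbf{f}$ to $\mathbf{f}^\mu$, which is exactly what the Hessian identity and the nonnegativity of the multipliers deliver.
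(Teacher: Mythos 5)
Your proposal is correct and follows essentially the same route as the paper: the paper's proof of Corollary~\ref{cor:stable_games} simply notes that $\mathbf{\dot{x}}^\top\text{D}\mathbf{f}\mathbf{\dot{x}}\leq0$ implies $\mathbf{\dot{x}}^\top\text{D}\mathbf{f}^\mu\mathbf{\dot{x}}\leq0$ and then declares the rest virtually identical to the proof of Theorem~\ref{thm:lyapunov_stability_potential_games}. Your Hessian identity $\text{D}\mathbf{f}^\mu=\text{D}\mathbf{f}-\sum_{k\in\mathcal{C}_e}\mu_k\nabla^2 g_k$, combined with convexity of the $g_k$ and $\mu_k\geq0$, is precisely the justification the paper leaves implicit, and your identification of that as the sole place Assumption~\ref{assump:full_potential_game} entered is accurate.
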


\begin{proof}
	Under the additional assumptions, and noting that $\mathbf{\dot{x}}^\top\text{\normalfont D}\mathbf{f}\mathbf{\dot{x}}\leq0$ implies that $\mathbf{\dot{x}}^\top\text{\normalfont D}\mathbf{f}^\mu\mathbf{\dot{x}}\leq0$, the proof is virtually identical to the proof of Theorem \ref{thm:lyapunov_stability_potential_games}. 
\end{proof}

\setcounter{thm}{7}
\begin{rem}
	Although the interpretation of the proposed primal-dual dynamics fits more naturally under the scope of full-potential games, Corollary \ref{cor:stable_games} generalizes our previous stability results to more general classes of primal games. In particular, one class of games that satisfies the condition $\mathbf{\dot{x}}^\top\text{\normalfont D}\mathbf{f}\mathbf{\dot{x}}\leq0$ is the class of stable/contractive games \cite{hofbauer2009stable}, \cite{parkCDC2019}.
\end{rem}

\section{Numerical experiments}
\label{sec:numerical_experiments}
In this section, we provide some numerical experiments to illustrate the formulation of the proposed primal-dual system regarding some classical games that are extended to consider constraints. It is worth to highlight that such constraints cannot be considered using conventional population dynamics. Without loss of generality,  we use $\max(\cdot, 0)$ for the functions $\rho_j(\cdot)$ and $\phi_l(\cdot)$, for all $j\in\mathcal{S}$ and all $l\in\mathcal{C}_e$. Note that such revision protocols lead to the popular Smith dynamics \cite{smith1984}. Moreover, for all the numerical integrations we use a step size of $0.01$s.

\subsection{A constrained congestion game}
Consider a population of players that seek to travel from point A to point B under the topology depicted in Fig. \ref{fig:congestion_topology}. There are eight roads, $\{r_k\}_{k=1}^8$, that together provide four possible strategies to travel from A to B. They are, $s_1=\{r_1, r_2\}$, $s_2=\{r_8, r_7, r_3, r_2\}$, $s_3=\{r_8, r_7, r_5, r_4\}$, and $s_4=\{r_8, r_6, r_4\}$. In contrast with classical congestion games \cite{sandholm2010}, here we assume that each road has a maximum usage level denoted as $\overline{u_k}$, for all $k=1,2,\dots,8$. For our experiment, we set $\overline{u_1}=\overline{u_3}=\overline{u_5}=\overline{u_6}=0.4$, $\overline{u_2}=\overline{u_4}=\overline{u_7}=0.6$, and $\overline{u_8}=0.9$, and we assume that $m_\mathcal{P}=1$. Furthermore, we assume that each road has a linear congestion cost given by $b_ku_k$, where $b_k\in\mathbb{R}_{>0}$ and $u_k\in\mathbb{R}_{\geq0}$ are the weight of congestion and usage level of the the $k$-th road, respectively. For simplicity, and to resemble the order of magnitude of the parameters of \cite[Example 3.1.6]{sandholm2010}, we (randomly) set $\mathbf{b}=[b_k]=[15,16,11,13,13,5,17,18]^\top$. The potential function for such a congestion game is thus given by
\begin{equation*}
\begin{split}
p(\mathbf{x}) = &-\frac{b_1}{2}x_1^2 - \frac{b_2}{2}(x_1 + x_2)^2 - \frac{b_3}{2}x_2^2 - \frac{b_4}{2}(x_3 + x_4)^2\\
&-\frac{b_5}{2}x_3^2 -\frac{b_6}{2}x_4^2 - \frac{b_7}{2}(x_2 + x_3)^2\\
&- \frac{b_8}{2}(x_2 + x_3 + x_4)^2,
\end{split}
\end{equation*}
which satisfies Assumption \ref{assump:full_potential_game}. Moreover, the maximum usage levels lead to the constraints $x_1 \leq \overline{u_1}$; $x_2 \leq \overline{u_3}$; $x_3 \leq \overline{u_5}$; $x_4 \leq \overline{u_6}$; $x_1+x_2 \leq \overline{u_2}$; $x_2+x_3 \leq \overline{u_7}$; $x_3+x_4 \leq \overline{u_4}$; and $x_2+x_3+x_4 \leq \overline{u_8}$. Thus, Assumption \ref{assump:convex_constraints} holds. Consequently, the primal-dual game is given by $f_1^\mu(\mathbf{x}, \boldsymbol{\mu}) = f_1(\mathbf{x}) - \mu_1 -\mu_5$; $f_2^\mu(\mathbf{x}, \boldsymbol{\mu}) = f_2(\mathbf{x}) - \mu_2 - \mu_5 - \mu_6 - \mu_8$; $f_3^\mu(\mathbf{x}, \boldsymbol{\mu}) = f_3(\mathbf{x}) - \mu_3 - \mu_6 - \mu_7 - \mu_8$; and $f_4^\mu(\mathbf{x}, \boldsymbol{\mu}) = f_4(\mathbf{x}) - \mu_4 - \mu_7 - \mu_8$,
where $f_i(\cdot) = \partial p(\cdot)/\partial x_i$, for all $i\in\mathcal{S}$. Notice that, with $\mathbf{\tilde{x}}=[0.25,0.25,0.25,0.25]^\top$, it is verified that Assumption \ref{assump:slater} holds. Moreover, observe that $p^*\leq0$, $p(\mathbf{\tilde{x}})=-12.1875$, and $\min_{k\in\mathcal{C}}|g_k(\mathbf{\tilde{x}})|=0.1$. Hence, setting $m_\mathcal{D}=122$ satisfies the condition (\ref{eq:condition_dual_population_mass}) of Theorem \ref{thm:equilibria_set_properties}. For such, we set $\mu_0(0)=122$ and $\mu_k(0)=0$ for all $k\in\mathcal{C}$, so that $\boldsymbol{\mu}(0)\in\Delta_{\mathcal{D}}$. Regarding the primal population, on the other hand, we randomly sample $\mathbf{x}(0)$ from $\Delta_{\mathcal{P}}$. Therefore, the Standing Assumption 1 holds. In consequence, Theorems \ref{thm:equilibria_set_properties} and \ref{thm:lyapunov_stability_potential_games} hold and the equilibria set $\mathcal{E}$ is asymptotically stable and is the primal-dual optimal set of the corresponding problem in (\ref{eq:primal_problem}) (c.f., Remark \ref{rem:theorems_2_and_3}). As illustration, Fig. \ref{fig:results_congestion} depicts the temporal evolution of the primal-dual system. Note that convergence to a fixed point is achieved, and, in fact, such fixed point belongs to $\mathcal{E}$ (this is verified with the aid of a convex optimization solver as shown in Fig. \ref{fig:results_congestion}).

\begin{figure}
	\centering
	\includegraphics[width=0.27\textwidth]{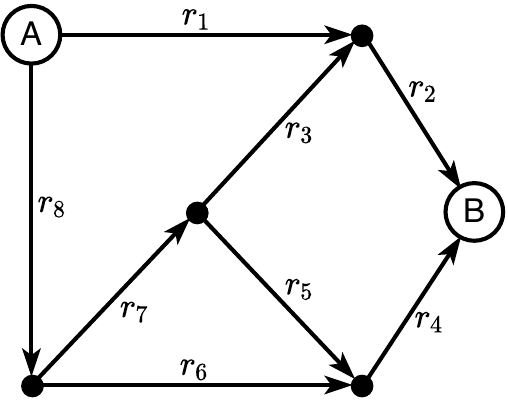}
	\caption{\, Considered topology for the congestion game.}
	\label{fig:congestion_topology}
\end{figure}

\begin{figure}
	\centering
	\includegraphics[width=0.45\textwidth]{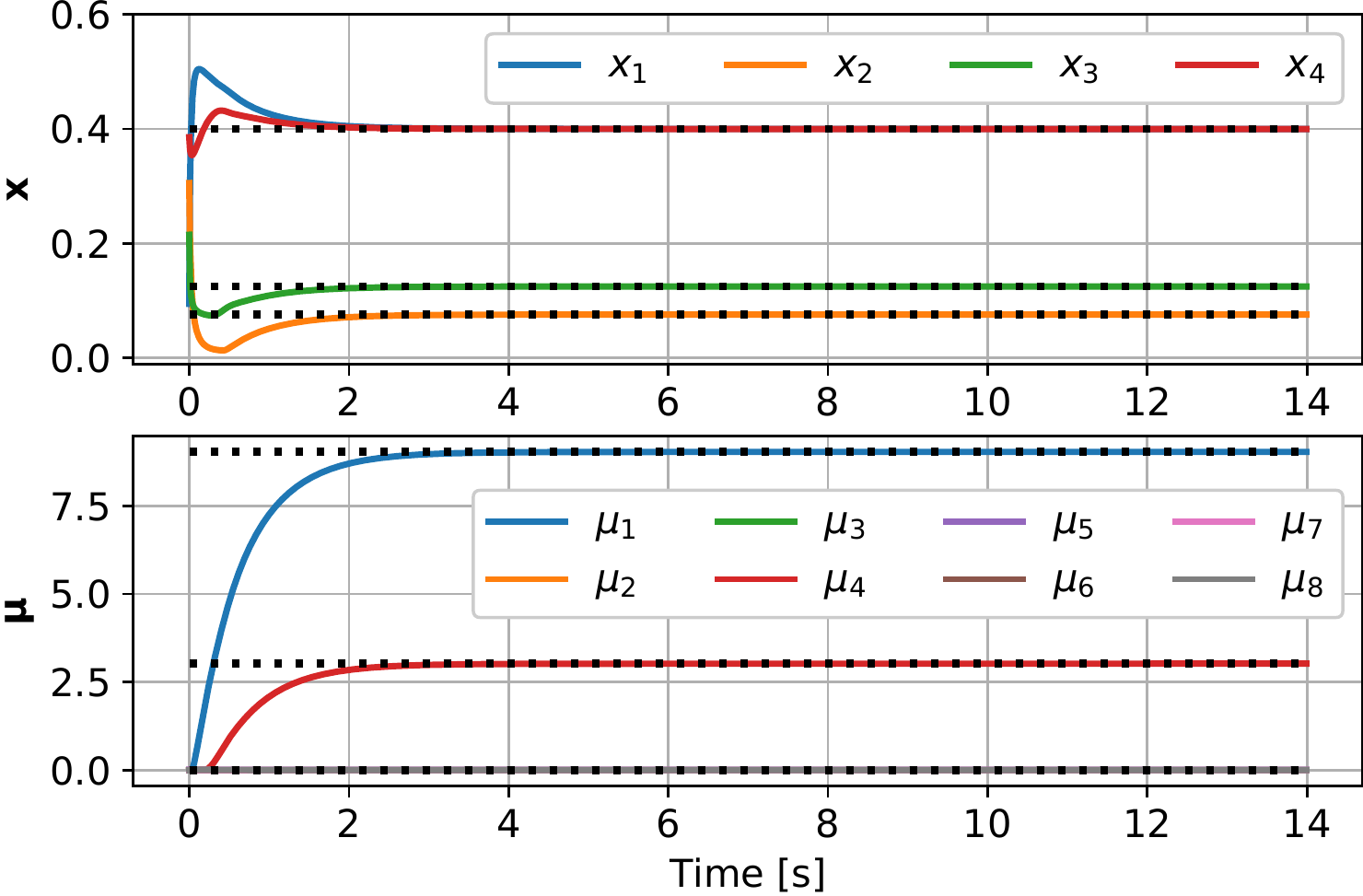}
	\caption{\, Evolution of the primal-dual system for the considered congestion game. The dotted black lines depict the optimal values of the primal and dual variables of the underlying optimization problem. Such values are obtained using CVXPY \cite{diamond2016cvxpy}. Note that we are not plotting $\mu_0$ due to its scale (recall that $\mu_0(t)=m_\mathcal{D} - \sum_{k\in\mathcal{C}}\mu_k(t)$ for all $t\geq0$).} 
	\label{fig:results_congestion}
\end{figure}

\subsection{Good Rock-Paper-Scissors with constraints}
Even though most of the developed analyses have been oriented to full-potential games (c.f., Assumption \ref{assump:full_potential_game}), as shown in Corollary \ref{cor:stable_games}, the developed approach is still applicable to more general classes of games, e.g., stable/contractive games. As illustration, consider an instance of the classical Good Rock Paper Scissor (RPS) game with a payoff matrix and corresponding fitness functions given by
\begin{equation*}
\mathbf{A} = \left[\begin{array}{ccc}0 & -1 & 2\\
2 & 0 & -1\\
-1 & 2 & 0\end{array}\right], \quad \begin{array}{c}f_1(\mathbf{x}) = 2x_3-x_2,\\
f_2(\mathbf{x}) = 2x_1 -x_3,\\
f_3(\mathbf{x}) = 2x_2-x_1,\end{array}
\end{equation*}
where we identify the three strategies as $1\rightarrow\text{Rock}$, $2\rightarrow\text{Paper}$, and $3\rightarrow\text{Scissors}$. Moreover, we consider an unitary mass of agents, i.e., $m_\mathcal{P}=1$, and the coupled convex constraint $x_1^2 + x_2^2 \leq0.1$ (note that such constraint rules out the unconstrained Nash equilibrium $x_1^*=x_2^*=x_3^*=1/3$). Clearly, the RPS game is not a full-potential game. Thus, Assumption \ref{assump:full_potential_game} is not satisfied. Nevertheless, it does satisfies the condition $\mathbf{\dot{x}}^\top\text{\normalfont D}\mathbf{f}\mathbf{\dot{x}}\leq0$, for all $t\geq0$, and so the result of Corollary \ref{cor:stable_games} follows. In this case, the primal-dual game is defined as $f_1^\mu(\boldsymbol{\mu}, \mathbf{x}) = 2x_3-x_2 - 2x_1\mu_1$; $f_2^\mu(\boldsymbol{\mu}, \mathbf{x}) = 2x_1-x_3 - 2x_2\mu_1$; and $f_3^\mu(\boldsymbol{\mu}, \mathbf{x}) = 2x_2-x_1$. Moreover, for our experiment we set $\mathbf{x}(0)=\left[1/3, 1/3, 1/3\right]^\top$ and $\boldsymbol{\mu}(0)=[4,0]^\top$, and so $m_\mathcal{D}=4$. As illustration, Fig. \ref{fig:stable_game} depicts the evolution of the primal-dual dynamics under such game. Notice that regardless of the non-full-potential nature of the game, the primal-dual dynamics do converge to $\mathcal{E}$, which in this case is feasible with respect to $\mathcal{X}$. However, it remains as an open topic to formally characterize the conditions for the feasibility of $\mathcal{E}$ under non-full-potential games, e.g., contractive or weighted-contractive games \cite{arcak2020dissipativity}.

\begin{figure}
	\centering
	\includegraphics[width=0.45\textwidth]{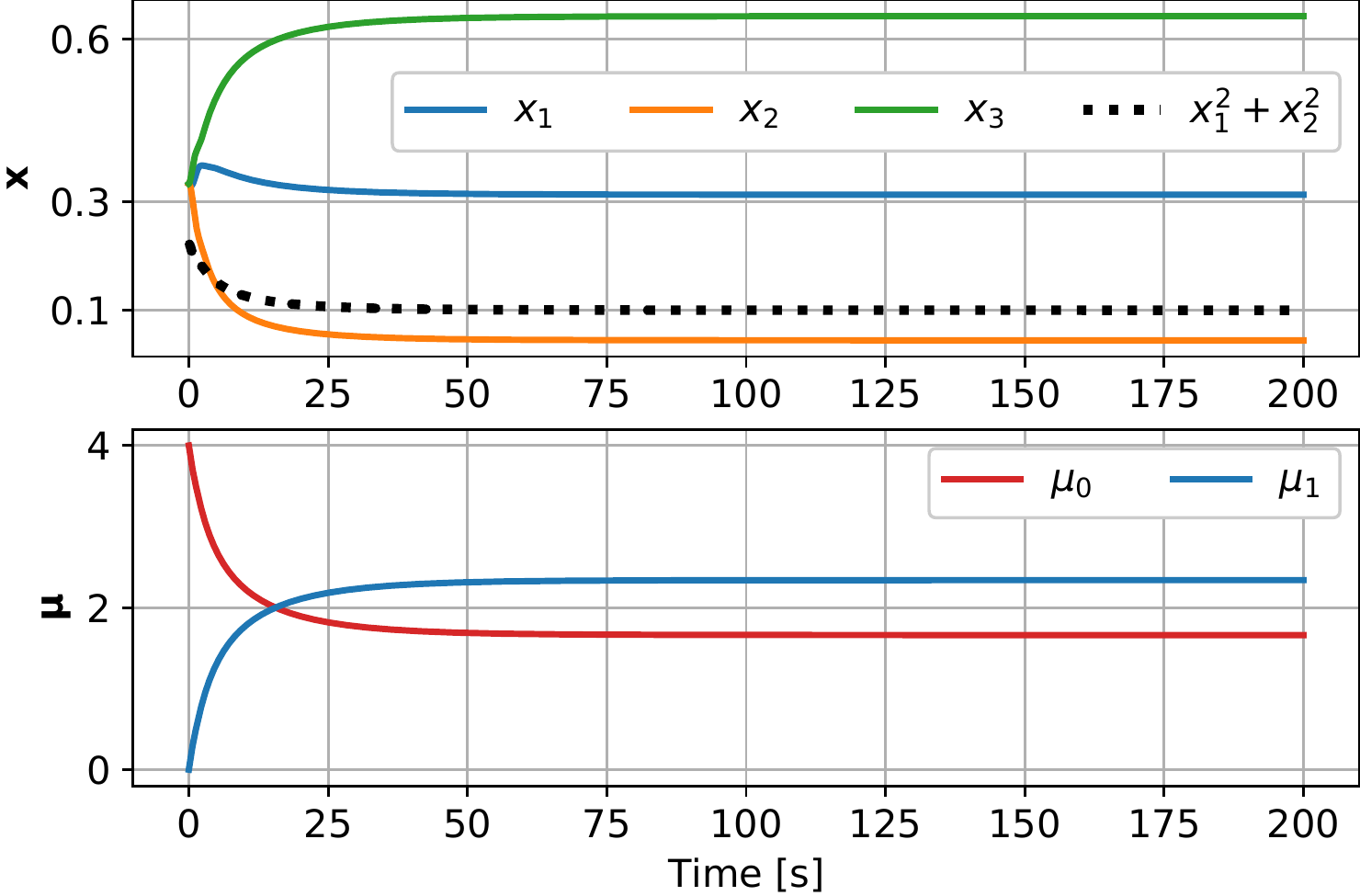}
	\caption{\, Evolution of the primal-dual system under the considered RPS game. Here, $\mathbf{x}^*\approx[0.313, 0.044, 0.643]^\top$.}
	\label{fig:stable_game}
\end{figure}

\section{Concluding remarks}
\label{sec:concluding_remarks}
In this paper, we have proposed and analyzed a class of primal-dual evolutionary dynamics that can be applied to study constrained population games considering general convex inequality constraints. We have provided sufficient conditions to guarantee the asymptotic stability of the equilibria set of the proposed dynamics when applied both to full-potential and non-full-potential games. Moreover, for the context of full-potential games, we have deduced sufficient conditions to guarantee that the equilibria set of the dynamics is feasible with respect to the considered convex constraints. Future work should focus on the extension of Theorem \ref{thm:equilibria_set_properties} to more general classes of games, e.g., contractive or weighted-contractive games, and to the study of the dynamic satisfaction of the considered convex constraints.

\bibliographystyle{plain} 
\bibliography{bibfile}

\end{document}